\documentclass[a4paper,12pt]{article}

\usepackage[utf8]{inputenc}
\usepackage[english]{babel}
\usepackage{amsfonts}
\usepackage{amsmath}
\usepackage{amssymb}
\usepackage{amsthm}
\usepackage{enumerate}

\theoremstyle{plain}
\newtheorem{theorem}{Theorem}
\newtheorem{lemma}[theorem]{Lemma}

\newtheorem{corollary}[theorem]{Corollary}

\theoremstyle{definition}
\newtheorem{definition}[theorem]{Definition}

\theoremstyle{remark}

\newtheorem{remark}[theorem]{Remark}

\newcommand*{\1}{1\!\!\,\mathrm{I}}

\newcommand*{\abs}[1]{\left|#1\right|}

\newcommand*{\Var}{\mathrm{Var}\,}

\newcommand*{\cov}[2]{\mathrm{cov}\,\left( #1,#2\right)}
\newcommand*{\ve}{\varepsilon}
\newcommand*{\vph}{\varphi}
\newcommand*{\mbZ}{\mathbb{Z}}
\newcommand*{\mbR}{\mathbb{R}}
\newcommand*{\mbN}{\mathbb{N}}

\newcommand*{\mcB}{\mathcal{B}}
\newcommand*{\mcP}{\mathcal{P}}
\newcommand*{\mcN}{\mathcal{N}}

\newcommand*{\mcF}{\mathcal{F}}
\newcommand*{\mbP}{\mathbf{P}}
\newcommand*{\mbE}{\mathbf{E}}

\begin{document}

\title{GAUSSIAN STRUCTURE IN COALESCING STOCHASTIC FLOWS}

\author{
  A. A. DOROGOVTSEV\\
  \texttt{Institute of Mathematics,}\\
  \texttt{National Academy of Science of Ukraine,}\\ 
  \texttt{Kyiv, Ukraine, andrey.dorogovtsev@gmail.com}
  \and
  E.V. GLINYANAYA\\
  \texttt{Institute of Mathematics,}\\
  \texttt{National Academy of Science of Ukraine,}
  \\ \texttt{Kyiv, Ukraine, glinkate@gmail.com}
}

\maketitle

\begin{abstract}
In the paper we consider the point measure that corresponds to Arratia flow.  The central limit theorem of the multiple integrals with respect to this measure was obtained. 
\end{abstract}
\makeatletter{\renewcommand*{\@makefnmark}{}
\footnotetext{{\bf Keywords:} central limit theorems; stationary point measure; flow with coalescing.

{\bf AMS Subject Classification:}{ 60F05, 60H40, 60G57 }}


\section{Introduction.}

Let $\{x(u,t),\ u\in \mbR, \ t\geq 0\}$ be the Arratia flow \cite{Arr}. It is known \cite{Harris} that for every $t>0$  the set $x(\mbR, t)$ is countable and locally finite. Denote $\{u_i\}_{i\in \mbZ} := x(\mbR, t)$ and define the point measure
\begin{equation}
\label{PP}
  N_t = \sum_{i\in \mbZ} \delta_{u_i}.  
\end{equation}

In this paper  we investigate limits in the central limit theorem for  integral functionals with respect to the factorial powers of the point measure $N_t.$ 
\begin{definition}(\cite{LP})
n-th factorial power $N^{(n)}$ of the point measure   $
N = \sum_{i\in \mbZ} \delta_{u_i}$ is defined by
$$
N^{(n)}(C) = \sum_{\substack{ i_1\neq \ldots \neq i_n}} \1{\{(u_{i_1}, \ldots, u_{i_n}) \in C\}},
$$
where $C\in \mathcal{B}(\mbR ^n) .$
\end{definition}
By $N^{\otimes n}$ we denote the usual power of measure  $N,$ i.e. for $\Delta_i \in \mbR,$\\
$$N^{\otimes n}(\Delta_1 \times ... \times\Delta_n) = N(\Delta_1)\ldots N(\Delta_n).$$
\begin{definition}
A locally integrable function $\rho^{(n)}: \mbR^n\to \mbR_+$
is called an $n-$point density function of a  point measure $N$ if, for any  subset $A\in \mathcal{B}(\mbR^n)$
the following formula holds
$$
\mbE N^{( n)}(A) = \int_{A}\rho^{(n)}(\vec x) d\vec x
$$
\end{definition}

For example, the Poisson process on the line with intensity measure $\mu(dx) =p(x)dx$   has $n-$point density function that is equal to  $$\rho^{(n)}(\vec x) = p^{\otimes n}(\vec x)= p(x_1)p(x_2)\ldots p(x_n).$$

In the papers \cite{MRTZ}, \cite{Tribe} the authors proved existence  of the $n-$point density functions $\rho^{(n)}$ for the point measure $N_t.$ 
For instance, 
\begin{align}
\label{densities1}
   &\rho_t^{(1)}(v) = \frac{1} {\sqrt{\pi t}} \\
\label{densities2}
    &\rho_t^{(2)}(v_1, v_2) =\dfrac{1}{\pi t}\left(1+\dfrac{\abs{v_2-v_1}}{2\sqrt t} \cdot e^{-(v_2-v_1)^2/4t} \cdot \int\limits_{\abs{v_2-v_1}/\sqrt t}^{+\infty}e^{-v^2/4}\,dv- e^{-(v_2-v_1)^2/2t}\right)
\end{align}


Connection  between integrals with respect to usual power of measure and factorial power  is given in the next lemma.
\begin{lemma}[\cite{DV}]
\label{exp_of_sum}
For any symmetric   non-negative function $f$ we have
$$
\int_{\mbR^n} f(\vec x)N^{\otimes n}(d \vec x) = \sum_{k=1}^n \sum_{\substack{1\leq l_1\leq \ldots \leq l_k\\ l_1+\ldots+l_k = n}}A_{l_1,\ldots, l_k}^n\int_{\mbR^k}
f(\underbrace{x_1,\ldots, x_1}_{l_1},\ldots, \underbrace{x_k,\ldots, x_k}_{l_k})N^{(k)}(d\vec{x}),
$$

$$
\int_{\mbR^n} f(\vec x)N^{(n)} (d \vec x)= \sum_{k=1}^n \sum_{\substack{1\leq l_1\leq \ldots \leq l_k\\ l_1+\ldots+l_k = n}}a_{l_1,\ldots, l_k}^n\int_{\mbR^k}
f(\underbrace{x_1,\ldots, x_1}_{l_1},\ldots, \underbrace{x_k,\ldots, x_k}_{l_k})N^{\otimes n}(d\vec{x}),
$$
where the constants   $A_{l_1,\ldots, l_k}^n$ and $a_{l_1,\ldots, l_k}^n$ do not depend on $f$ and $N$
\end{lemma}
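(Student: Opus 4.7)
The plan is to establish the first identity by decomposing the sum underlying $N^{\otimes n}$ according to the equality pattern of the indices, and then to derive the second identity from the first by inverting the resulting triangular linear system.

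For the first equality I would expand
$$\int_{\mbR^n} f(\vec x)\,N^{\otimes n}(d\vec x) = \sum_{(i_1,\ldots,i_n)\in\mbZ^n} f(u_{i_1},\ldots,u_{i_n}),$$
and then partition the index set $\mbZ^n$ by the set partition $\pi$ of $\{1,\ldots,n\}$ defined by $a\sim_\pi b$ iff $i_a=i_b$. If $\pi$ has blocks of sizes $l_1\le\ldots\le l_k$ with $l_1+\ldots+l_k=n$, then the sub-sum over tuples with partition shape $\pi$ equals
$$\int_{\mbR^k} f\bigl(\underbrace{x_1,\ldots,x_1}_{l_1},\ldots,\underbrace{x_k,\ldots,x_k}_{l_k}\bigr)\,N^{(k)}(d\vec x),$$
because the distinct values taken by the $i_j$'s range precisely over ordered $k$-tuples of pairwise distinct integers, which is the domain of $N^{(k)}$. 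Since $f$ is symmetric, the integral depends only on the multiset $(l_1,\ldots,l_k)$ of block sizes, so collecting all $\pi$ with that multiset produces the constant $A^n_{l_1,\ldots,l_k}$, which equals the number of set partitions of $\{1,\ldots,n\}$ with blocks of the prescribed sizes, namely $n!/\bigl(\prod_i l_i!\cdot\prod_j m_j!\bigr)$ with $m_j=\#\{i:l_i=j\}$. Crucially this constant depends on neither $f$ nor $N$.

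For the second identity I would use M\"obius inversion on the partition lattice $\mcP(n)$. Writing
$$\1\{i_1,\ldots,i_n\text{ distinct}\} = \sum_{\pi\in\mcP(n)} \mu(\hat 0,\pi)\prod_{B\in\pi}\1\{i_a=i_b\ \forall a,b\in B\}$$
and integrating against $f$ expresses $\int f\,dN^{(n)}$ as a linear combination of $N^{\otimes k}$-integrals of $f$ evaluated on diagonals; after symmetrisation one obtains integer coefficients $a^n_{l_1,\ldots,l_k}$ indexed by block-size patterns. Alternatively, the linear map from the vector of $N^{(k)}$-integrals to the vector of $N^{\otimes n}$-integrals (both indexed by partition shapes of $n$) is triangular with respect to refinement of partitions, hence invertible, and the inverse matrix provides the constants $a^n_{l_1,\ldots,l_k}$.

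No probabilistic input is needed beyond the definitions of $N^{\otimes n}$ and $N^{(n)}$; the identities in fact hold pointwise for every realisation of $N$. The only non-trivial part is combinatorial bookkeeping: correctly counting the ordered set partitions with a prescribed multiset of block sizes and verifying that the corresponding matrix is triangular on $\mcP(n)$. This is the main (but routine) obstacle.
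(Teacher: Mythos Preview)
Your argument is correct and is essentially the standard proof of this identity. The paper does not supply its own proof of this lemma; it is quoted from Daley and Vere--Jones \cite{DV}, so there is nothing to compare against beyond noting that your decomposition by the coincidence partition of the index tuple, followed by M\"obius inversion (equivalently, inversion of the triangular system indexed by partition shapes), is exactly the textbook route.

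Two small remarks. First, in your description of the sub-sum for a fixed partition $\pi$, it is worth making explicit that you use both the symmetry of $f$ \emph{and} the symmetry of $N^{(k)}$ under coordinate permutations to conclude that the resulting integral depends only on the multiset of block sizes; you do say this, but the role of the symmetry of $N^{(k)}$ is implicit. Second, the non-negativity hypothesis on $f$ is what justifies the free rearrangement of the (possibly infinite) sums; you might state this, since for signed $f$ one would need an integrability assumption. Your closed form $A^n_{l_1,\ldots,l_k}=n!\big/\bigl(\prod_i l_i!\,\prod_j m_j!\bigr)$ is correct, and the companion coefficients $a^n_{l_1,\ldots,l_k}$ are obtained from the M\"obius function of the partition lattice, $\mu(\hat 0,\pi)=\prod_{B\in\pi}(-1)^{|B|-1}(|B|-1)!$, after grouping by block-size type. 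Note also that the $N^{\otimes n}$ on the right-hand side of the second displayed identity in the paper is evidently a typo for $N^{\otimes k}$, which is how you (correctly) read it.
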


 Now we will define integral with respect to the point measure $N_t$ (\ref{PP}) which corresponds to the Arratia flow at time $t$.
\begin{theorem}
The  operator $A_t$
which is defined  for $\phi\in L_2([0,1])$ as
$$
A_t \phi = \int_0^1 \phi(v) dN_t(v)
$$
is a continuous operator acting  from $L_2([0,1])$ to $L_2(\Omega, \mcF, \mbP)$ and
$$
\|A_t\|^2 \leq \frac{1}{\pi t}+\frac{1}{\sqrt{\pi t}}
$$
\end{theorem}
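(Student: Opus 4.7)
The plan is to compute $\mbE |A_t\phi|^2$ explicitly, reduce it to an integral against the densities $\rho_t^{(1)}$ and $\rho_t^{(2)}$, and then obtain the claimed bound by an elementary estimate on $\rho_t^{(2)}$.

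First, for a simple function $\phi$ the quantity $A_t\phi = \sum_i \phi(u_i)\1_{\{u_i\in[0,1]\}}$ is almost surely finite (since $N_t([0,1])$ is integrable by $\rho_t^{(1)}\in L_1([0,1])$), and
\[
(A_t\phi)^2 = \int_{[0,1]^2}\phi(v_1)\phi(v_2)\,N_t^{\otimes 2}(dv_1\,dv_2).
\]
Applying the decomposition in Lemma~\ref{exp_of_sum} to the symmetric function $\phi\otimes\phi$ on $[0,1]^2$, or just splitting the double sum into the diagonal and off-diagonal parts, we obtain
\[
(A_t\phi)^2 = \int_{[0,1]^2}\phi(v_1)\phi(v_2)\,N_t^{(2)}(dv_1\,dv_2) + \int_{[0,1]}\phi(v)^2\,N_t(dv).
\]
Taking expectations and using the definition of the density functions together with \eqref{densities1}--\eqref{densities2},
\[
\mbE(A_t\phi)^2 = \int_{[0,1]^2}\phi(v_1)\phi(v_2)\rho_t^{(2)}(v_1,v_2)\,dv_1\,dv_2 + \frac{1}{\sqrt{\pi t}}\int_{[0,1]}\phi(v)^2\,dv.
\]

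The second term is already $\tfrac{1}{\sqrt{\pi t}}\|\phi\|_{L_2}^2$. For the first term I would show the pointwise bound $\rho_t^{(2)}(v_1,v_2)\les \frac{1}{\pi t}$, which in view of \eqref{densities2} reduces, after the substitution $s=|v_2-v_1|/\sqrt t\ges 0$, to verifying
\[
\frac{s}{2}\,e^{-s^2/4}\int_{s}^{+\infty}e^{-v^2/4}\,dv \;\les\; e^{-s^2/2}.
\]
Estimating $\int_{s}^{+\infty}e^{-v^2/4}\,dv\les \int_{s}^{+\infty}\tfrac{v}{s}e^{-v^2/4}\,dv=\tfrac{2}{s}e^{-s^2/4}$ immediately gives the required inequality; this is the main (though mild) computational step.

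Once $\rho_t^{(2)}\les \tfrac{1}{\pi t}$ is established, the Cauchy--Schwarz inequality yields
\[
\int_{[0,1]^2}\phi(v_1)\phi(v_2)\rho_t^{(2)}(v_1,v_2)\,dv_1\,dv_2 \les \frac{1}{\pi t}\Bigl(\int_{[0,1]}|\phi(v)|\,dv\Bigr)^{2} \les \frac{1}{\pi t}\|\phi\|_{L_2}^{2}.
\]
Adding the two contributions gives $\mbE(A_t\phi)^2\les\bigl(\tfrac{1}{\pi t}+\tfrac{1}{\sqrt{\pi t}}\bigr)\|\phi\|_{L_2}^{2}$, which both proves boundedness (hence continuity) of $A_t$ from $L_2([0,1])$ into $L_2(\Omega,\mcF,\mbP)$, and delivers the stated operator norm estimate. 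Extension from simple $\phi$ to general $\phi\in L_2([0,1])$ is then standard by density and the linearity of $A_t$.
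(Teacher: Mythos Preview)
Your proof is correct and follows essentially the same route as the paper: express $\mbE(A_t\phi)^2$ via $\rho_t^{(1)}$ and $\rho_t^{(2)}$, use $\rho_t^{(2)}\les 1/(\pi t)$, and extend by density. The only difference is that the paper simply cites the general bound $\rho_t^{(n)}\les (\pi t)^{-n/2}$ from \cite{MRTZ}, whereas you verify the $n=2$ case directly from \eqref{densities2} via the elementary tail estimate $\int_s^\infty e^{-v^2/4}\,dv\les \tfrac{2}{s}e^{-s^2/4}$; this makes your argument self-contained at no real extra cost.
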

\begin{proof}
For non-negative function $\phi$, by Campbel formula \cite{LP}

$$
\mbE A_t\phi = \int_0^1 \phi(v) \rho^{(1)} (v)dv
$$
and   by  the definition of  $\rho^{(1)} $ and $\rho^{(2)}$
$$
\mbE \left(A_t\phi \right)^2 = \int_{0}^{1}\int_{0}^{1}\phi(v_1) \phi(v_2)
 \rho^{(2)}(v_1, v_2) dv_1 \, dv_2+
\int_{0}^{1}\phi^2(v_1)
 \rho^{(1)}(v_1) dv_1,
$$

For the point measure $N_t$ its $n-$point density functions $\rho_t^{(n)}$ satisfy inequality  $\rho_t^{(n)}\leq \frac{1}{(\pi t)^{n/2}}$  for all $n\geq 1$, \cite{MRTZ}. Using this we can write 
$$
\mbE \left(A_t\phi \right)^2 \leq \left(\frac{1}{\pi t}+\frac{1}{\sqrt{\pi t}}\right)\|\phi\|^2_{L_2([0;1])}
$$

Using obtained inequality, operator $A_t$ can be extended from operator defined on  $C([0;1])$ to a continuous operator from $L_2([0;1])$ to $L_2(\Omega, \mcF, \mbP).$
\end{proof}

\begin{remark}
Note that the variance for the integral  $
A_t \phi = \int_0^1 \phi(v) dN_t(v)
$ is given by formula
\begin{equation}
\label{var}
\Var{A_t \phi}=\\ \int_{0}^{1}\int_{0}^{1}\phi(v_1) \phi(v_2)
 \Big(\rho_t^{(2)}(v_1, v_2)- \frac {1}{{\pi t}} \Big) dv_1 \, dv_2+
\frac {1}{\sqrt{\pi t}}\int_{0}^{1}\phi^2(v_1)
 dv_1
\end{equation}
\end{remark}
From the lemma \ref{exp_of_sum} we get the formulas for moments of the defined integral
\begin{lemma}
\label{moments_int}
$$
\mbE\left(\int_0^1 \phi(u)dN_t(u)\right)^k = \sum_{j=1}^k \sum_{\substack{l_1,\ldots, l_j \geq 1,\\ l_1+\ldots+l_j=k}} A^{k}_{l_1,\ldots,l_j}\int_{\mbR^j} \prod_{i=1}^j \phi^{l_i}(x_i) \rho_t^{(j)}(\vec x) d\,\vec x,
$$
with constants $A^k_{l_1,\ldots,l_j}$ from Lemma \ref{exp_of_sum}.
For example, for $k=3,$
\begin{multline*}
\mbE\left(\int_0^1 \phi(u)dN_t(u)\right)^3 =\\
= \int_{[0;1]^3}\phi^{\otimes 3}(\vec u)\rho_t^{(3)}(\vec u) d\vec u+
3\int_{[0;1]^2}\phi^{\otimes 2}(u_1,u_2)\rho_t^{(2)}(u_1,u_2) \vph(u_1)d\vec u+
\frac{1}{\sqrt{\pi t}}\int_{[0;1]}\phi^3( u)d u
\end{multline*}
\end{lemma}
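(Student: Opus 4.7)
The plan is to reduce everything to the identity
\[
\left(\int_0^1 \phi(u)\, dN_t(u)\right)^k = \int_{[0,1]^k} \phi^{\otimes k}(\vec x)\, N_t^{\otimes k}(d\vec x),
\]
and then apply the combinatorial Lemma \ref{exp_of_sum} to rewrite the right-hand side as a linear combination of integrals against the factorial powers $N_t^{(j)}$.

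First, I would treat $\phi \ges 0$, so that all quantities below are non-negative and Fubini applies automatically. In that case Lemma \ref{exp_of_sum}, applied to the symmetric non-negative function $\phi^{\otimes k}(x_1,\ldots,x_k)$, gives
\[
\int_{[0,1]^k} \phi^{\otimes k}(\vec x)\, N_t^{\otimes k}(d\vec x) = \sum_{j=1}^{k} \sum_{\substack{l_1,\ldots,l_j\ges 1\\ l_1+\cdots+l_j=k}} A^{k}_{l_1,\ldots,l_j}\int_{\mbR^j} \prod_{i=1}^{j}\phi^{l_i}(x_i)\, N_t^{(j)}(d\vec x),
\]
since upon substituting $\underbrace{x_1,\ldots,x_1}_{l_1},\ldots,\underbrace{x_j,\ldots,x_j}_{l_j}$ into $\phi^{\otimes k}$, the integrand collapses precisely to $\prod_{i=1}^{j}\phi^{l_i}(x_i)$.

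Next, I would take expectations term by term. By the defining property of the $n$-point density function (Campbell's formula, already used in the proof of the preceding theorem) together with a standard approximation of $\prod_i \phi^{l_i}$ by simple functions,
\[
\mbE\int_{\mbR^j} \prod_{i=1}^{j}\phi^{l_i}(x_i)\, N_t^{(j)}(d\vec x) = \int_{\mbR^j} \prod_{i=1}^{j}\phi^{l_i}(x_i)\, \rho_t^{(j)}(\vec x)\, d\vec x,
\]
and all these integrals are finite because of the uniform bound $\rho_t^{(j)}\les (\pi t)^{-j/2}$ from \cite{MRTZ} and the boundedness of $\phi$ on $[0,1]$. Summing gives the claimed formula for non-negative $\phi$. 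For general $\phi\in L_2([0,1])$ one writes $\phi=\phi^+-\phi^-$, expands the $k$-th power multinomially, and applies the non-negative case to each term; the same bound on $\rho_t^{(j)}$ justifies Fubini throughout.

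The only genuine issue is the bookkeeping of the constants $A^k_{l_1,\ldots,l_j}$: they are produced by Lemma \ref{exp_of_sum} and are independent of $f$ and $N$, so they transfer verbatim. For the displayed case $k=3$ the partitions are $(1,1,1)$, $(1,2)$, and $(3)$, producing the three terms with coefficients $A^3_{1,1,1}=1$, $A^3_{1,2}=3$ (from the three ways of placing the doubled coordinate), and $A^3_{3}=1$; substituting $\rho_t^{(1)}=1/\sqrt{\pi t}$ into the last term recovers the example. The main (minor) obstacle is therefore just verifying that the coefficients coming out of Lemma \ref{exp_of_sum} match those in the example, which is a direct combinatorial check rather than a nontrivial analytic argument.
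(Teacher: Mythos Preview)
Your proposal is correct and matches the paper's approach exactly: the paper does not write out a proof of this lemma at all but simply presents it as an immediate consequence of Lemma~\ref{exp_of_sum}, which is precisely the reduction you carry out. One small slip: you justify finiteness by ``the boundedness of $\phi$ on $[0,1]$'', but $\phi\in L_2$ need not be bounded; what you actually need (and what the paper's subsequent remark alludes to) is $\phi\in L_k([0,1])$ so that $\int_0^1\phi^k(u)\,du<\infty$, together with the uniform bound on $\rho_t^{(j)}$.
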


\begin{remark}
This lemma shows that moments of integrals are finite if integrand has finite moments. 
\end{remark}

Also we can define multiple integral with respect to factorial power of the point measure $N_t$
\begin{theorem}Let $L_{2,{symm}}([0;1]^k)$ be the space of square integrable symmetric functions.
The linear operator $A^k_{t}$ from $L_{2, symm}([0;1]^k)$ to $L_2(\Omega, \mcF, \mbP)$ which is defined   as
$$
A^k_{t} \phi = \int_{[0;1]^k} \phi(\vec v) N^{( k)}_t(d\,v_1,\ldots,d\, v_k),
$$
is a continuous operator.
\end{theorem}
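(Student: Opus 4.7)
The plan is to establish the estimate $\mathbf{E}(A^k_t\phi)^2 \le C(k,t)\,\|\phi\|_{L_2([0;1]^k)}^2$ for $\phi$ in a dense subset of $L_{2,\mathrm{symm}}([0;1]^k)$ (say the bounded continuous symmetric functions, on which the factorial-power integral is manifestly defined), and then extend $A^k_t$ by continuity. The two ingredients are the uniform bound $\rho_t^{(n)}(\vec x)\le (\pi t)^{-n/2}$ from \cite{MRTZ} and a decomposition of the product measure $N_t^{(k)}\otimes N_t^{(k)}$ into factorial powers of $N_t$ of lower order.

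First I would expand
$$\mathbf{E}(A^k_t\phi)^2 = \mathbf{E}\int_{[0;1]^{2k}}\phi(\vec v)\phi(\vec w)\,N_t^{(k)}(d\vec v)\,N_t^{(k)}(d\vec w),$$
and split the double sum over distinct $k$-tuples of indices $(i_1,\ldots,i_k)$ and $(j_1,\ldots,j_k)$ according to the number $m\in\{0,1,\ldots,k\}$ of atoms the two tuples share. Since $\phi$ is symmetric, each resulting block produces an integral that depends only on $m$, and one can rewrite the sum in the form
$$\mathbf{E}(A^k_t\phi)^2 = \sum_{m=0}^{k} c_{k,m}\,\mathbf{E}\!\int_{[0;1]^{2k-m}}\phi(\vec a,\vec b)\,\phi(\vec a,\vec c)\,N_t^{(2k-m)}(d\vec a\,d\vec b\,d\vec c),$$
where $\vec a\in[0;1]^m$, $\vec b,\vec c\in[0;1]^{k-m}$, and $c_{k,m}=\binom{k}{m}^2 m!$ counts the ways to select and match the shared positions.

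By the definition of the $n$-point density, each expectation equals $\int \phi(\vec a,\vec b)\phi(\vec a,\vec c)\,\rho_t^{(2k-m)}(\vec a,\vec b,\vec c)\,d\vec a\,d\vec b\,d\vec c$. Using $\rho_t^{(n)}\le (\pi t)^{-n/2}$, Fubini in $(\vec b,\vec c)$, and Cauchy--Schwarz in $\vec b$ (exploiting that the unit cube has measure $1$), this is bounded by $(\pi t)^{-(2k-m)/2}\|\phi\|_{L_2([0;1]^k)}^2$. Summing in $m$ yields the uniform bound, which implies both that $A^k_t$ is well defined on $L_{2,\mathrm{symm}}([0;1]^k)$ and continuous.

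The main subtlety I anticipate is the combinatorial bookkeeping in the first step: one has to verify that merging $m$ indices from the $i$-tuple with $m$ indices from the $j$-tuple, summed over all possible position assignments, produces a single factorial measure $N_t^{(2k-m)}$ with the stated coefficient, and that symmetry of $\phi$ lets us place the shared variables in the canonical positions used above. Once this accounting is settled, the pointwise bound on $\rho_t^{(n)}$ together with a single application of Cauchy--Schwarz closes the argument in a routine manner.
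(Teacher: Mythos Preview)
Your argument is correct and follows essentially the same route as the paper: expand $(A_t^k\phi)^2$ as a double sum over pairs of distinct $k$-tuples, group by the number $m$ of shared atoms to obtain integrals against $\rho_t^{(2k-m)}$, and then apply the uniform bound $\rho_t^{(n)}\le(\pi t)^{-n/2}$. The paper leaves the combinatorial constant unspecified and the final Cauchy--Schwarz step implicit, whereas you spell both out, but the strategies coincide.
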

\begin{proof}
By definition of  $k-$point functions $\rho^{(k)}$ of the measure $N_t^{(k)}$
$$
\mbE \int_{[0;1]^k} \phi(\vec v) N^{(k)}_t(d\,v_1,\ldots,d\, v_k) =\int_{[0;1]^k}\phi(\vec v)\rho_t^{(k)}(\vec v) d\, \vec v.
$$
Rewriting the square of integral with respect to factorial product as sum of series indexed with non-equal indexes we get
\begin{multline*}
\mbE \left(\int_{[0;1]^k} \phi(\vec v) N^{( k)}_t(d\,v_1,\ldots,d\, v_k)\right)^2 =\\
=\mbE\sum_{i_1,j_1}\sum_{\substack{i_2\neq i_1\\ j_2\neq j_1}} \ldots
\sum_{\substack{i_k\neq i_1,\ldots, i_{k-1}\\j_k\neq j_1,\ldots, j_{k-1}}} \phi(u_{i_1},\ldots, u_{i_k})\phi(u_{j_1},\ldots, u_{j_k}) = \\
= \sum_{l=0}^k 
c_l\int_{[0;1]^k} \int_{[0;1]^{k-l}}\phi(\vec u)\phi(u_1,\ldots, u_l, v_1, \ldots, v_{k-l})\rho^{(2k-l)}(\vec v,\vec u)d\, \vec v d\,\vec u
\end{multline*}
with some constants $c_l\in \mbN.$

Applying uniform estimation for the $n-$point densities $\rho_t^{(n)}\leq \frac{1}{(\pi t)^{n/2}}$ \cite{MRTZ} we get
$$
\mbE \left(\int_{[0;1]^n} \phi(\vec v) N^{( n)}_t(d\,v_1,\ldots,d\, v_k)\right)^2
\leq C\int_{[0;1]^n} \phi^2(\vec v)d\vec v.
$$
This proves the theorem.
\end{proof}

\section{ Finite-dimensional central limit theorem for  $\mathbf{(A_t)_ {t>0}}$}

Let $\mcP = \{f:\mbR \to \mbR: \ f|_{[0;1]} \in L_2([0;1]),\ f(x) = f(x+1), \ x \in \mbR\}$ be a class of functions $f$ such that:\\
1) $f$ is 1-periodic;\\
2) restriction of $f$ on the interval $[0;1]$ is square integrable.\\

Similarly to definition of the operator $A_t$ let us define the integral operator $$
A_{k,t}f=\int_k^{k+1}f(u)N_t(du).
$$
Such operators are well-defined for functions $f\in \mcP.$
In this section we will prove  that the sequence $\{A_{k,t}f\}_{k\geq 1}$ satisfies central limit theorem.

The main reason why we can expect the validity of the central limit theorem is the weak dependence of the sequence $ \{A_{k,t} f\}_{k\geq 1}$. It can be checked that this sequence satisfies a mixing condition. 
Recall that for a stationary sequence $\{\xi_k\}_{k\in K}$ (here $K=\mbR \text{ or } K=\mbN$) its $\alpha-$mixing coefficient is defined as
$$
\alpha_{\xi}(h)  = \sup\{|\mbP(AB) - \mbP(A) \mbP(B)|, \ A\in \mcF(\xi)_{-\infty}^0, \ B\in \mcF(\xi)_h^{\infty}\}
$$
where $\mcF(\xi)_{a}^b = \sigma\{\xi_k, \ a<k<b\}.$

\begin{lemma}
\label{mixing}
 Let $f\in \mcP$. For fixed $t>0$ consider sequence
$\xi_k = A_{k,t} f,$ $k\in \mbZ$. Then
$$
\alpha_{\xi}(h) \leq 4\int_{h/3}^{\infty} \frac{1} {\sqrt{2\pi t}} e^{-x^2/2}dx \leq 
\frac {12}{h\sqrt{2\pi t}} \exp(-h^2/18).
$$

\end{lemma}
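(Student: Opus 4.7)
The strategy is to exploit the order-preserving property of the Arratia flow: points in $N_t$ lying in widely separated intervals can be traced back to trajectories whose starting labels are also widely separated, and those trajectories are Brownian motions that evolve independently of one another until their first meeting time.

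First I would reduce the problem to bounding $\sup_{A,B}|\mbP(A \cap B) - \mbP(A)\mbP(B)|$ with $A \in \sigma(N_t|_{(-\infty,0]})$ and $B \in \sigma(N_t|_{[h,\infty)})$, since $\xi_k = \int_k^{k+1} f\,dN_t$ is measurable with respect to $\sigma(N_t|_{[k,k+1]})$. Then I would introduce two ``sentinel'' trajectories $x(h/3,\cdot)$ and $x(2h/3,\cdot)$---each a Brownian motion started at $h/3$ and $2h/3$, respectively---and the good event
$$
E = \{x(h/3, t) > 0\} \cap \{x(2h/3, t) < h\}.
$$
By monotonicity of the flow, on $E$ every trajectory with label $u \geq h/3$ finishes in $(0,\infty)$, so only labels $u < h/3$ contribute to $N_t|_{(-\infty,0]}$; symmetrically, only labels $u > 2h/3$ contribute to $N_t|_{[h,\infty)}$.

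The decisive step is to argue that, on $E$ (enlarged if necessary by the event that the two sentinel trajectories have not coalesced by time $t$), the restriction of the flow to labels in $(-\infty, h/3)$ and to labels in $(2h/3, \infty)$ are independent Arratia sub-flows. This rests on the coalescing-Brownian-motion construction of the flow: trajectories behave as independent Brownian motions up to their first meeting time, and on the good event no left-labelled trajectory ever meets a right-labelled one before time $t$. Granted this, the standard good-event decomposition yields $|\mbP(A \cap B) - \mbP(A)\mbP(B)| \leq 4\,\mbP(E^c)$.

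Finally, $\mbP(E^c) \leq \mbP(x(h/3,t)\leq 0) + \mbP(x(2h/3,t)\geq h)$, each summand being the tail of a Gaussian at level $h/3$, which produces the integral form stated in the lemma; Mills' ratio then gives the explicit bound $\frac{12}{h\sqrt{2\pi t}}\exp(-h^2/18)$. The hardest step is the decoupling claim: making it fully rigorous requires either an explicit coupling with two independent Arratia sub-flows, or a careful appeal to the Brownian web underlying the flow---this cannot be read off the correlation functions $\rho^{(n)}_t$ alone.
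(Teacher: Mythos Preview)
Your proposal is essentially the paper's own argument: the same sentinel points $h/3$ and $2h/3$, the same good event $E=\{x(h/3,t)>0,\ x(2h/3,t)<h\}$, and the same decoupling via the fact that coalescing Brownian trajectories are independent until they meet. The paper executes the decoupling step by working with characteristic functions $e^{i(\vec a,x(\vec u,t))}\1_{\{x(u_n,t)<0\}}$ and an explicit independent copy $\tilde x$ of the flow (noting these functions generate the relevant $\sigma$-field), whereas you phrase it at the level of events in $\sigma(N_t|_{(-\infty,0]})$ and $\sigma(N_t|_{[h,\infty)})$; but the mechanism is identical, and you correctly flag the coupling step as the one needing care. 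One small bookkeeping point: your ``standard good-event decomposition'' constant $4\,\mbP(E^c)$ combined with $\mbP(E^c)\le 2\cdot(\text{Gaussian tail})$ would overshoot the stated bound by a factor of $2$; the paper obtains $2\,\mbP(\bar B)$ in the coupling step, which recovers the constant $4$ in the lemma exactly.
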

\begin{proof}
For arbitrary $n, m\in \mathbb N$, denote by $D_{n,m}$ the set  of vectors $\vec u, \vec a \in \mbR^n$, $\vec v, \vec b\in \mbR^m$, such that
$u_1<u_2<\ldots u_n<v_1< v_2<\ldots< v_m$. Consider
\begin{multline*}
\Delta := \sup_{D_{n,m}}|\mbE e^{i(\vec a, x(\vec u,t)) }e^{i(\vec a, x(\vec v,t)) }\1_{\{x(u_n,t)<0\}}\1_{\{x(v_1,t)>h\}}-\\-\mbE e^{i(\vec a, x(\vec u,t)) }\1_{\{x(u_n,t)<0\}}\mbE e^{i(\vec a, x(\vec v,t)) }\1_{\{x(v_1,t)>h\}}|
\end{multline*}
where we denoted by $x(\vec u,t) = (x(u_1,t), \ldots , x(u_n,t)).$ For $L<h/2$ let  $B$ be the random event 
$B = \{x(L,t)>0, x(h-L,t)<h\}$. Denoting by $\tilde x $ an independent copy of the Arratia flow we get
\begin{multline*}
   \Delta\leq  \sup_{D_{n,m}}|\mbE e^{i(\vec a, x(\vec u,t)) }e^{i(\vec a, x(\vec v,t)) }\1_{\{x(u_n,t)<0\}}\1_{\{x(v_1,t)>h\}}\1_B-\\
   -\mbE e^{i(\vec a,  \tilde x(\vec u,t)) }\1_{\{\tilde x(u_n,t)<0\}} e^{i(\vec a,  x(\vec v,t)) }\1_{\{ x(v_1,t)>h\}}| + \mbP (\bar B)\leq \\
   \leq \sup_{D_{n,m}}|\mbE e^{i(\vec a, \tilde x(\vec u,t)) }e^{i(\vec a, x(\vec v,t)) }\1_{\{\tilde x(u_n,t)<0\}}\1_{\{x(v_1,t)>h\}}\1_B-\\
   -\mbE e^{i(\vec a, \tilde x(\vec u,t)) }\1_{\{\tilde x(v_n,t)<0\}} e^{i(\vec a, \tilde x(\vec v,t)) }\1_{\{\tilde x(v_1,t)>h\}}\1_B| + 2\mbP (\bar B),
\end{multline*}
where we use that  the processes $x(u_i, \cdot)$ and $x(v_j, \cdot)$ are independent up to the meeting moment, i.e. $d\langle x(u_i,\cdot), x(v_j,\cdot)\rangle (t) =  \1_{\{x(u_i,t)= x(v_j,t)\}}dt.$
Now, one can get that for $L=h/3$ the upper bound $P(\bar B) \leq 2\int_{h/3}^{\infty} \frac{1} {\sqrt{2\pi t}} e^{-x^2/2}dx $. Since the family of functions $\{e^{i(\vec a, x(\vec u,t))} \1_{x(u_n,t)<0}, \ u_1<u_2<\ldots< u_n, \ n\in \mathbb N\}$ generates the $\sigma-$field $\sigma\{\xi_k, k<0\}$ the statement of the lemma follows from obtained inequality. 
\end{proof}
It is known that if the $\alpha-$mixing coefficient for a stationary sequence decrease fast enough then  under suitable moments conditions this sequence satisfies the central limit theorem \cite {IbLin}. 
\begin{theorem}
\label{CLT}

\begin{enumerate}
  \item Let $f\in \mcP$ such that $f|_{[0;1]} \in L_3([0;1])$. For any $t> 0$ we have
$$
X_{t}^n(f)=\frac {\sum_{k=0}^{n-1}\left(A_{k,t}f - \mbE A_{k,t} f\right)}{\sqrt n} \Rightarrow \zeta_f(t), \ n\to \infty,
$$
where $\zeta_f(t)$ is a Gaussian random variable with zero mean and variance
$$
\sigma^2_t(f) =
\int_{0}^{1}\int_{0}^{1}f(v_1) f(v_2)
 G_t(v_1, v_2)dv_1 \, dv_2
+\frac {1}{\sqrt{\pi t}}\int_{0}^{1}f^2(v_1)dv_1,
$$ with\\
$
    G_t(v_1,v_2)=g_t(v_1-v_2) +  2\sum_{l= 1}^{\infty} g_t(v_1-v_2 +l),
$\\
and
$
    g_t(v_1-v_2) = \rho_t^{(2)}(v_1, v_2)- \frac {1}{{\pi t}}.
$

  \item For any $0<t_1<t_2<\ldots< t_m< T$  and $f \in \mcP$
$$
\Big(X_{t_1}^n(f), \ldots, X_{t_m}^n(f)\Big)\Rightarrow \left(\zeta_f(t_1), \ldots \zeta_f (t_m) \right)
$$
where $\left(\zeta_f(t_1), \ldots \zeta_f (t_m) \right)$ is a Gaussian vector with zero mean and covariance matrix $ \Sigma_{t_1, \ldots, t_m} (f) = (c_{ij})_{i,j = 1}^{m}$ with
$$
c_{ij} =\lim_{n\to \infty}\frac 1 n\cov{ X_{t_{i}}^n(f) }{X_{t_{j}}^n(f) }, \ c_{ii}=\sigma^2_{t_i}(f),
$$
\item For any functions $f^{(1)}, \ldots, f^{(m)}\in \mcP$  and $t>0$ we have
$$
\Big(X_{t}^n\left((f^{(1)}\right), \ldots, X_{t}^n\left(f^{(m)}\right)\Big)\Rightarrow \left(\zeta_{f^{(1)}}(t), \ldots, \zeta_{f^{(m)}}(t)\right), \ n\to \infty,
$$
where $\left(\zeta_t(f^{(1)}), \ldots, \zeta_t(f^{(m)})\right)$ is a centered Gaussian vector with covariance
\begin{multline*}
\cov{\zeta_t(f^{(i)})}{{\zeta_t(f^{(j)})}} =\\=\frac 1 2 \int_0^1 \int_0^1 \left[f^{(i)}(u)f^{(j)}(v)+f^{(i)}(v)f^{(j)}(u)\right]G_t(u,v) dudv +\\+\frac {1}{\sqrt{\pi t}}\int_0^1f^{(i)}(u)f^{(j)}(u)du.
\end{multline*}
\end{enumerate}

\end{theorem}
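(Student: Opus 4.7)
The plan is to apply the classical central limit theorem for $\alpha$-mixing stationary sequences \cite{IbLin}, using Lemma~\ref{mixing} to verify the mixing hypothesis.

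For part~(1), I would first check that $\xi_k=A_{k,t}f-\mbE A_{k,t}f$ is strictly stationary: the spatial translation invariance of the Arratia flow, combined with $1$-periodicity of $f\in\mcP$ (the substitution $u\mapsto u-k$ inside $\int_k^{k+1}f(u)\,dN_t(u)$ absorbs the shift), gives stationarity. Ibragimov's theorem requires moments of order $2+\delta$ and $\sum_h\alpha_\xi(h)^{\delta/(2+\delta)}<\infty$; choosing $\delta=1$, the hypothesis $f|_{[0;1]}\in L_3$ together with Lemma~\ref{moments_int} yields $\mbE|\xi_k|^3<\infty$, and the super-exponential bound of Lemma~\ref{mixing} makes the summability condition trivial. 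This gives convergence of $X^n_t(f)$ to a centred Gaussian of variance $\Var(\xi_0)+2\sum_{l\ge 1}\cov{\xi_0}{\xi_l}$.

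The second step is to identify this limiting variance with the formula in the statement. The term $\Var(\xi_0)$ is precisely~(\ref{var}). For $l\ge 1$ the intervals $[0;1]$ and $[l;l+1]$ are disjoint, so by the defining property of $\rho_t^{(2)}$, the substitution $v_2\mapsto v_2+l$ and $1$-periodicity of $f$,
\begin{equation*}
\cov{\xi_0}{\xi_l} = \int_0^1\!\!\int_0^1 f(v_1)f(v_2)\Bigl(\rho_t^{(2)}(v_1,v_2+l)-\tfrac{1}{\pi t}\Bigr)\,dv_1\,dv_2 = \int_0^1\!\!\int_0^1 f(v_1)f(v_2)\,g_t(v_2-v_1+l)\,dv_1\,dv_2.
\end{equation*}
Relabelling $v_1\leftrightarrow v_2$ in this integral (legitimate since $f(v_1)f(v_2)$ is symmetric) replaces $g_t(v_2-v_1+l)$ by $g_t(v_1-v_2+l)$; summing over $l$ then assembles the kernel $G_t$ claimed in the theorem. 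Absolute convergence of the series is immediate from~(\ref{densities2}): $g_t(x)$ has Gaussian decay as $|x|\to\infty$, which dominates the sum.

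For parts~(2) and~(3) I would use the Cram\'er--Wold device. Any linear combination of the relevant sequences produces a stationary sequence $\eta_k$ whose third moment is finite by Lemma~\ref{moments_int}; only an exponential $\alpha$-mixing bound for $\eta_k$ remains to be checked. In part~(3), $\eta_k$ is measurable with respect to $N_t|_{[k;k+1]}$ and Lemma~\ref{mixing} applies verbatim. In part~(2) I would extend the coupling argument of Lemma~\ref{mixing} to the joint flow at the times $t_1,\dots,t_m$: on the event $B_{h,t_m}=\{x(h/3,t_m)>0,\;x(2h/3,t_m)<h\}$ the trajectories from $h/3$ and $2h/3$ have not coalesced by time $t_m$, hence by non-crossing of the Arratia flow they remain distinct at every earlier time as well, and the sub-flows $\{x(u,s):u<0,\,s\le t_m\}$ and $\{x(u,s):u>h,\,s\le t_m\}$ are independent on $B_{h,t_m}$. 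The Gaussian tail bound on $\mbP(\bar B_{h,t_m})$ is identical to the one in Lemma~\ref{mixing}, with $t$ replaced by $t_m$. Ibragimov's theorem then yields the Gaussian limit of $\eta_k/\sqrt{n}$, and the covariance matrices are obtained by an analogue of the one-dimensional variance identification: polarisation of the scalar formula for part~(3), and the same computation using the joint two-time distribution of $(N_{t_i},N_{t_j})$ for part~(2).

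The main obstacle I anticipate is the joint mixing step in part~(2): the flow observed at different times is highly dependent (it is a single realisation), so exponential decorrelation across space for the entire time vector is not automatic. The resolution relies on monotonicity and non-splitting of Arratia trajectories---once coalesced they stay coalesced---so a pair of trajectories that has not met by the largest time has not met at any earlier time, and a single coupling event $B_{h,t_m}$ decorrelates all times simultaneously. A secondary, purely technical, issue is the convergence of $n^{-1}\cov{X^n_{t_i}(f)}{X^n_{t_j}(f)}$ to the claimed $c_{ij}$, which requires absolute summability of the analogue of $\sum_l g_t(\cdot+l)$ for the two-time density; Gaussian decay of cross-correlations at distant spatial points should again suffice.
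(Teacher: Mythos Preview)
Your proposal is correct and follows essentially the same route as the paper: stationarity from periodicity and spatial invariance of the flow, the mixing bound of Lemma~\ref{mixing}, third moments via Lemma~\ref{moments_int}, and then the Ibragimov--Linnik CLT. Your organisation of the variance identification as $\Var(\xi_0)+2\sum_{l\ge 1}\cov{\xi_0}{\xi_l}$ is the standard output of that theorem; the paper instead expands $\frac{1}{n}\Var\bigl(\sum_{k=0}^{n-1}Y_k\bigr)=\frac{1}{n}\int_0^n\!\int_0^n f(v_1)f(v_2)g_t(v_1-v_2)\,dv_1\,dv_2+\frac{1}{\sqrt{\pi t}}\int_0^1 f^2$, splits $[0,n]^2$ into unit squares, regroups by $l=k_1-k_2$ to obtain the Ces\`aro sums $\frac{1}{n}\sum_{l=1}^{n-1}(n-l)g_t(v_1-v_2+l)$, and passes to the limit using uniform convergence of $\sum_l l\,g_t(\cdot+l)$. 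The two computations are equivalent; yours is a bit cleaner since it bypasses the Ces\`aro step.

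For parts~(2) and~(3) the paper says only that the proofs are ``similar to the first one'', so your Cram\'er--Wold reduction is exactly what is intended, and you supply more detail than the paper does. In particular, your extension of the coupling event to $B_{h,t_m}=\{x(h/3,t_m)>0,\ x(2h/3,t_m)<h\}$ and the observation that non-coalescence at the largest time $t_m$ propagates to all earlier times is the right way to obtain a single mixing bound valid for the vector $(\xi_k(t_1),\dots,\xi_k(t_m))$; the paper does not make this step explicit.
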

\begin{remark}
The expression for the covariances $c_{i,l}$ in the second statement of the theorem will be discussed in the next section.
\end{remark}
\begin{proof}

Proof of the first statement of the theorem.\\ For each $t>0$ we denote
$$
X_{t}^n (f)= \frac {\sum_{k=0}^{n-1}\left(A_{k,t}f - \mbE A_{k,t} f\right)}{\sqrt n} =\frac 1 {\sqrt n} \sum_{k=0}^{n-1} Y_k
$$
 Due to periodicity of the function $f$ and  stationarity with respect to spatial variable of the Arratia flow, the sequence of random variables $\{Y_k\}_{k\in \mbZ}$ is strictly stationary.
By lemma \ref{mixing} its mixing coefficient
$$
\alpha_Y(h) \leq  c_T \int_h^{\infty} e^{-y^2/2}dy.
$$
By lemma \ref{moments_int} $\mbE |Y_k|^3<\infty.$  Now we can apply central limit theorem for weakly dependent random variables (\cite {IbLin})
$$
X_{t}^n(f) \Rightarrow \mcN(0, \sigma^2_t(f)),\  \text{as } {n\to \infty,}
$$
where
\begin{multline*}
\sigma^2_t(f) = \lim_{n\to \infty} \frac 1 n \Var{\sum_{k=0}^{n-1} Y_k} = \\
= \lim_{n\to \infty} \frac 1 n \int_{0}^{n}\int_{0}^{n}f(v_1) f(v_2)
 \Big(\rho_t^{(2)}(v_1, v_2)- \frac {1}{{\pi t}} \Big) dv_1 \, dv_2+
\frac {1}{\sqrt{\pi t}}\int_{0}^{n}f^2(v_1)
 dv_1
\end{multline*}
Note that the function $ \Big(\rho_t^{(2)}(v_1, v_2)- \frac {1}{{\pi t}} \Big)$ depends on $v_1, v_2 $ only  via $|v_1-v_2|$ (see formula (\ref{densities2})). Then we can define
$$
g_t(v_1-v_2) := \rho_t^{(2)}(v_1, v_2)- \frac {1}{{\pi t}}.
$$
Using periodicity of $f$
\begin{multline*}
\sigma^2_t(f)
= \lim_{n\to \infty} \frac 1 n \Big[\int_{0}^{n}\int_{0}^{n}f(v_1) f(v_2)
 g_t(v_1-v_2) dv_1 \, dv_2+
\frac {1}{\sqrt{\pi t}}\int_{0}^{n}f^2(v_1) dv_1\Big]
=\\
= \lim_{n\to \infty} \frac 1 n \Big[\sum_{k_1, k_2 = 0}^{n-1}
\int_{k_1}^{k_1+1}\int_{k_2}^{k_2+1}\!\!\!f(v_1) f(v_2)
 g_t(v_1-v_2) dv_1 \, dv_2  + n\frac {1}{\sqrt{\pi t}}\int_{0}^{1}f^2(v_1)dv_1
\Big] = \\
=\lim_{n\to \infty} \frac 1 n \Big[\sum_{k_1, k_2 = 0}^{n-1}
\int_{0}^{1}\int_{0}^{1}f(v_1) f(v_2)
 g_t(v_1+k_1-v_2 -k_2) dv_1 \, dv_2
\Big] + \frac {1}{\sqrt{\pi t}}\int_{0}^{1}f^2(v_1)dv_1 = \\
=\lim_{n\to \infty} \frac 1 n
\int_{0}^{1}\int_{0}^{1}f(v_1) f(v_2)
 \Big[2\sum_{l= 1}^{n-1}(n-l) g_t(v_1-v_2 +l) +  n g_t(v_1-v_2)\Big]dv_1 \, dv_2
+ \\+\frac {1}{\sqrt{\pi t}}\int_{0}^{1}f^2(v_1)dv_1
\end{multline*}

For the function  $g_t(v_1-v_2) := \rho_t^{(2)}(v_1, v_2)- \frac {1}{{\pi t}}$ one can write the precise formula using (\ref{densities2}) :

\begin{align}
\label{gt}
   g_t(v_1-v_2) =\dfrac{1}{\pi t}\left(\dfrac{\abs{v_2-v_1}}{2\sqrt{t}} \cdot e^{-(v_2-v_1)^2/4t} \cdot \int\limits_{\abs{v_2-v_1}/\sqrt t}^{+\infty}e^{-v^2/4t}\,dv- e^{-(v_2-v_1)^2/2t}\right). 
\end{align}

From this it is easy to see that the series $\sum_{l= 1}^{\infty}l g_t(v_1-v_2 +l)$ converges uniformly on  $v_1, v_2 \in [0,1]$, so we can pass to the limit with respect to $n$ in the formula for $\sigma^2_t(f)$

\begin{multline*}
\sigma^2_t(f) =
\int_{0}^{1}\int_{0}^{1}f(v_1) f(v_2)
 \Big[g_t(v_1-v_2) +2\sum_{l= 1}^{\infty} g_t(v_1-v_2 +l) \Big]dv_1 \, dv_2+\\
+\frac {1}{\sqrt{\pi t}}\int_{0}^{1}f^2(v_1)dv_1.
\end{multline*}

Proof of the second  and third statements of the theorem are similar to the first one.
\end{proof}

\section{Central limit theorem for conditional expectations.}
Attempt to find covariances in the statement (2) of theorem \ref{CLT} naturally leads to the question of calculation conditional expectations $\mbE \left(\int_{0}^1 f(u) N_{t_2}(du)\big| \mcF_{t_1}^x\right)$.
In this section we study limit behaviour of the conditional expectations for $t_1 <t_2$
$$
\frac 1{\sqrt n} \mbE \left(\int_{0}^n f(u) N_{t_2}(du)\big| \mcF_{t_1}^x\right), \text{ as } n\to \infty
$$
where, as before, $N_t$ is the point measure associated to the Arratia flow $x$ and $\mcF_t^x = \sigma \{ x(u, s),\  u\in \mbR, \ s\leq t\}.$

  To this aim we need representation of the Arratia flow in term of Brownian web.  Recall  that  Brownian web is defined as a family of random processes $\{\varphi_{s,\cdot }(u) \in C([s,\infty))\}$ such that, given $(u_1, t_1), . . . ,(u_m , t_m )$ the processes  $\varphi_{t_1,\cdot }(u_1), . . . , \varphi_{t_m,\cdot }(u_m)$ are coalescing Brownian motions (see, for instance, \cite{BrWeb}). In this section we consider Arratia flow  $x(u,t) = \varphi_{0,t}(u).$ We construct the point measure   $N_{t_2}$  using  measure $N_{t_1}$  and the map $\varphi_{t_1, t_2}$ from Brownian web. 
  
  To describe general properties of such construction let us introduce the family of random measures on $\mbR$ related to the point measure $\mu$ and non-decreasing function $\varphi: \mbR \to \mbR$ such that
\begin{align*}
    &\lim_{x\to +\infty} \varphi(x) = +\infty,\\
    &\lim_{x\to -\infty} \varphi(x) = -\infty.
\end{align*}
For every $k\geq 2$ and vector $\vec v=(v_1,..., v_k)$ denote
\begin{align*}
     &v_* = \min\{v_1,...,v_k\},\\
     &v^* = \max\{v_1,...,v_k\}.
\end{align*}
Define the set
$$
M_k^{\varphi} = \{\vec v: \varphi(v_*)=\varphi(v^*)\} \subset \mbR^k
$$
Note, that $M_k^\varphi$ is a Borel subset of $\mbR^k.$ Denote by  $\mu_\varphi^{(k)}$ the restriction of the measure $\mu^{(k)}$ on the set $M_k^\varphi.$ Finally, define the measure $\mu_{k,\varphi}$ as an image of measure $\mu_\varphi^{(k)}$ under the mapping
$$
\mbR^k \ni \vec v = (v_1,\ldots, v_k)\mapsto \varphi(v_*).
$$
Such  measures can be used for the expression of the counting measure obtained from the measure $\mu$ under the action of the function $\phi.$ Define the counting measure $\nu = (\mu\circ \varphi^{-1})_*$ by 
$$
\nu(\Delta) = |\{u\in \Delta:  \ \mu(\varphi^{-1}\{u\})>0\}|.
$$

The integrals with respect to measure $\nu$ can be expressed in terms of integrals with respect to $\mu_{k,\varphi}, \ k\geq 1.$

\begin{lemma}
\label{mu_k_series}
For a continuous function $f$ on $\mbR$ with compact support
\begin{multline*}
    \int_{\mbR} fd\nu = \int_{\mbR}fd\mu_{1,\vph} -
    \frac{1}{2!} \int_{\mbR} f d\mu_{2,\vph}+
    \frac{1}{3!} \int_{\mbR} f d\mu_{3,\vph}-\ldots+
    (-1)^{n+1} \frac{1}{n!} \int_{\mbR} f d\mu_{n,\vph}+\ldots
\end{multline*}
\end{lemma}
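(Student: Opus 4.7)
The plan is to identify the atomic structure of both sides and reduce the claim to a single combinatorial identity. Writing $\mu = \sum_i \delta_{u_i}$, the hypotheses that $\vph$ is non-decreasing with $\vph(\pm\infty)=\pm\infty$ imply that $\vph^{-1}(\supp f)$ is bounded, hence by local finiteness of the point measure $\mu$ only finitely many atoms $u_i$ fall into this preimage. I would group those atoms by their $\vph$-value: let $U \subset \mbR$ denote the finite set of values so obtained, and for $u \in U$ set $m(u):=\#\{i:\vph(u_i)=u\}$, which is a finite positive integer.

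The monotonicity of $\vph$ gives the convenient description $\vec v \in M_k^\vph \iff \vph(v_1)=\cdots=\vph(v_k)$, so $M_k^\vph$ decomposes as a union of fibres of the map $\vec v\mapsto \vph(v_*)$. Restricting $\mu^{(k)}$ to $M_k^\vph$ and pushing forward by this map therefore places, at each $u\in U$, mass equal to the number of ordered $k$-tuples of pairwise distinct atoms of $\mu$ lying in the fibre $\vph^{-1}\{u\}$, namely $m(u)(m(u)-1)\cdots(m(u)-k+1)$. Consequently
$$
\int_\mbR f\,d\mu_{k,\vph}=\sum_{u\in U} m(u)(m(u)-1)\cdots(m(u)-k+1)\,f(u),\qquad \int_\mbR f\,d\nu=\sum_{u\in U}f(u).
$$

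Substituting these expressions into the claimed identity and exchanging the (finite) sum over $u\in U$ with the sum over $k$, the lemma reduces for each $u\in U$ to the verification
$$
\sum_{k=1}^{\infty}\frac{(-1)^{k+1}}{k!}\,m(u)(m(u)-1)\cdots(m(u)-k+1)=\sum_{k=1}^{m(u)}(-1)^{k+1}\binom{m(u)}{k}=1-(1-1)^{m(u)}=1,
$$
which is immediate from the binomial theorem, and the series truncates at $k=m(u)$ so no convergence question arises. The entire argument hinges on the fibre description of $M_k^\vph$, for which the monotonicity of $\vph$ is essential; beyond this, I do not anticipate any serious obstacle, as everything else is bookkeeping on a finite set of atoms.
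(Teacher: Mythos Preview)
Your proposal is correct and follows essentially the same route as the paper: both reduce the identity to a single atom $y$ of $\nu$ with fibre multiplicity $n$, and both verify $\sum_{k=1}^n(-1)^{k+1}\binom{n}{k}=1$, the paper phrasing this as inclusion--exclusion for $|\bigcup_k\{\vph(x_k)\}|$ while you invoke the binomial theorem directly. Your write-up is in fact a bit more explicit about why only finitely many atoms are in play and about the falling-factorial description of $\mu_{k,\vph}(\{y\})$, but the arguments coincide.
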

\begin{proof}
Since the measure $\mu $ is locally finite and due to the the condition on the function $\vph$ the series from the right hand side contains only finite number of summands with probability 1.  To prove the lemma it is enough to consider point $y$ such that $\nu(\{y\})=1.$ Then for some ordered points $x_1<\ldots<x_n$ 
$$
\vph(x_1) = \ldots =\vph(x_n) = y, \ \ \mu(\{x_1\}) = \ldots =\mu(\{x_n\}) = 1.
$$
Consequently, 
\begin{multline*}
    \nu(\{y\}) = \Big|\bigcup_{k=1}^n\{\vph(x_k)\}\Big|=
    \sum_{k=1}^n|\{\vph(x_k)\}|-
    \sum_{1\leq k_1<k_2\leq n}|\{\vph(x_{k_1})\}\cap\{\vph(x_{k_2})\}|+\ldots\\
    + (-1)^n\Big|\bigcap_{k=1}^n\{\vph(x_k)\}\Big|=
    \mu_{1,\vph}(\{y\})-\frac{1}{2!}\mu_{2,\vph}(\{y\})+\ldots+\frac{1}{n!}\mu_{n,\vph}(\{y\}).
\end{multline*}
This proves the lemma.
\end{proof}

\begin{remark}
Note, that in case when $\mu$ is generated by the Arratia flow, i.e. $\mu=N_t$, one can check that series from the lemma absolutely converges for any measurable and bounded function $f.$
\end{remark}

In view of the lemma \ref{mu_k_series} we need to prove central limit theorem for the integrals with respect to the measures $\mu_{k,\vph}$ for the case when $\mu = N_{t_1}$ and $\vph_{t_1,t_2}$ is the mapping from the Brownian web. To calculate corresponding mean and variance let us find the expectation of the integrals with respect to measures $\mu_{k,\vph}.$ We will do this in two cases. One when $\mu$ is random and $\vph$ is deterministic and another when $\mu$ is deterministic
 and $\vph$  is random.
 
 \begin{lemma}
 Suppose, that the random measure $\mu$ has the point densities $\rho^{(k)},$ $k\geq 1$ and the function $\vph$ is deterministic. Then 
 $$
 \mbE \int_{\mbR}f d\mu_{k,\vph}= \int_{\mbR^k}f(\vph(v_*))\1_{M^{\vph}_k}(\vec v)\rho^{(k)}(\vec v) d\vec v.
 $$
 \end{lemma}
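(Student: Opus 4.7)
The plan is to unwind the definition of $\mu_{k,\vph}$ step by step and then apply the definition of the $k$-point density function exactly once at the end. Since $\mu_{k,\vph}$ is, by construction, the push-forward of $\mu_\vph^{(k)}$ under the map $T\colon \mbR^k\to\mbR$, $T(\vec v)=\vph(v_*)$, the change-of-variables formula for image measures gives
$$
\int_{\mbR} f\, d\mu_{k,\vph} \;=\; \int_{\mbR^k} f(\vph(v_*))\, d\mu_\vph^{(k)}(\vec v).
$$
Next, because $\mu_\vph^{(k)}$ is just the restriction of the factorial power $\mu^{(k)}$ to the Borel set $M_k^\vph$, I can rewrite this as an integral over $\mbR^k$ against $\mu^{(k)}$ with the indicator inserted:
$$
\int_{\mbR} f\, d\mu_{k,\vph} \;=\; \int_{\mbR^k} f(\vph(v_*))\,\1_{M_k^\vph}(\vec v)\, d\mu^{(k)}(\vec v).
$$

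Now I would take expectations on both sides. Since $\vph$ is deterministic and Borel measurable (monotone), the integrand $(\vec v,\omega)\mapsto f(\vph(v_*))\1_{M_k^\vph}(\vec v)$ is a deterministic non-negative (or integrable, after splitting into positive and negative parts) Borel function of $\vec v$. Therefore the definition of the $n$-point density function, applied to this Borel integrand in place of an indicator, gives
$$
\mbE \int_{\mbR^k} f(\vph(v_*))\,\1_{M_k^\vph}(\vec v)\, d\mu^{(k)}(\vec v) \;=\; \int_{\mbR^k} f(\vph(v_*))\,\1_{M_k^\vph}(\vec v)\,\rho^{(k)}(\vec v)\, d\vec v,
$$
which is precisely the claim.

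The only point requiring a little care is the extension of the definition of $\rho^{(k)}$, originally stated for $\mbE\mu^{(k)}(A)=\int_A \rho^{(k)}$ with Borel $A$, to general Borel integrands. This follows from the usual monotone-class / linearity argument: the identity holds for indicators of Borel sets by definition, extends by linearity to simple non-negative functions, and then by monotone convergence to all non-negative Borel functions $g\colon\mbR^k\to\mbR_+$; one then splits $f(\vph(v_*))\1_{M_k^\vph}$ into positive and negative parts (both are integrable against $\rho^{(k)}$ since $f$ has compact support and $\rho^{(k)}$ is locally integrable). I do not expect any serious obstacle here; the lemma is essentially a definitional rearrangement once one has isolated this mild measure-theoretic extension of Campbell's formula.
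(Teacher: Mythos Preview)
Your proposal is correct and matches the paper's approach: the paper's proof is the single sentence ``The proof follows directly from the definition of measures $\mu_{k,\vph}$,'' and your argument is precisely the unwinding of that definition together with the standard monotone-class extension of the $k$-point density identity from indicators to nonnegative Borel integrands.
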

 \begin{proof}
 The proof follows directly from the definition of measures $\mu_{k,\vph}.$
 \end{proof}

To formulate the statement when $\vph$ is random we need the following notation and assumtion. Suppose that for arbitrary $a<b$ there exist density $q(a,b,u)$ such that for every Borel subset $\Delta \subset \mbR$
$$
\mbP\{\vph(a) = \vph(b), \ \vph(a) \in \Delta\} = \int_{\Delta} q(a,b,u)du.
$$

\begin{lemma}
\label{moment_in_nu}
For the deterministic measure $\mu$ and random function $\vph$ the following relation holds
\begin{multline*}
    \mbE \int_{\mbR} f d\nu = \int_{\mbR}\left(\int_{\mbR} f(v) p(u,v) dv)\right)\mu(du)-\\-
    \frac{1}{2!}\int_{\mbR^2}\left(\int_{\mbR} f(v) q(u_*,u^*,v) dv)\right)\mu^{(2)}(d\vec u)+\ldots\\
    +
      (-1)^{n+1} \frac{1}{n!}\int_{\mbR^n}\left(\int_{\mbR} f(v) q(u_*,u^*,v) dv)\right)\mu^{(n)}(d\vec u)+\ldots.
\end{multline*}
Here $p(u,v)$ is the probability density of $\vph(u),$ $u\in \mbR.$
\end{lemma}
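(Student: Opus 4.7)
The plan is to take expectations on both sides of the identity in Lemma \ref{mu_k_series}, exchange the expectation with the sum, and then evaluate each term by unfolding the definition of $\mu_{k,\vph}$ and using the densities $p$ and $q.$

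First I would localize. Since $f$ has compact support, say $\supp f \subset [-R,R]$, the assumption $\vph(\pm\infty)=\pm\infty$ combined with monotonicity implies that, almost surely, $\vph^{-1}([-R,R])$ is contained in some bounded interval $[A(\omega),B(\omega)].$ Because $\mu$ is deterministic and locally finite, $N(\omega):=\mu([A(\omega),B(\omega)])$ is an a.s.\ finite random variable, and the series from Lemma \ref{mu_k_series} in fact terminates after at most $N(\omega)$ terms. This provides the dominating bound needed to justify the forthcoming interchange of $\mbE$ and $\sum_{k\geq 1}.$

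Next, for each fixed $k\ge 1$, the definition of $\mu_{k,\vph}$ as the push-forward of $\mu^{(k)}|_{M_k^\vph}$ under $\vec v\mapsto \vph(v_*)$ gives
\begin{equation*}
\int_\mbR f\, d\mu_{k,\vph} = \int_{\mbR^k} f(\vph(v_*))\,\1_{M_k^\vph}(\vec v)\,\mu^{(k)}(d\vec v).
\end{equation*}
Fubini's theorem, legitimate because $\mu^{(k)}$ is deterministic and the integrand is bounded with support in the bounded set $[A,B]^k$, then yields
\begin{equation*}
\mbE\int_\mbR f\, d\mu_{k,\vph} = \int_{\mbR^k} \mbE\!\left[f(\vph(v_*))\,\1_{\{\vph(v_*)=\vph(v^*)\}}\right]\mu^{(k)}(d\vec v).
\end{equation*}
For $k=1,$ the indicator is identically one and the inner expectation equals $\int f(v)\,p(v_1,v)\,dv$ by definition of $p.$ For $k\ge 2,$ since $\vph$ is non-decreasing the event $\{\vph(v_*)=\vph(v^*)\}$ already forces $\vph(v_1)=\cdots=\vph(v_k),$ so the hypothesis on $q$ gives directly
\begin{equation*}
\mbE\!\left[f(\vph(v_*))\,\1_{\{\vph(v_*)=\vph(v^*)\}}\right] = \int_\mbR f(v)\,q(v_*,v^*,v)\,dv.
\end{equation*}
Substituting these expressions into the termwise expectation of Lemma \ref{mu_k_series} produces exactly the claimed expansion.

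The main obstacle I anticipate is not any single computation but the justification of exchanging the expectation with the infinite series: the number of nonzero terms depends on the random interval $[A(\omega),B(\omega)]$ and on the local structure of $\vph$, so one must leverage the compact support of $f$ together with local finiteness of $\mu$ to produce a uniform dominating function before appealing to dominated convergence. Everything else reduces to Fubini's theorem and direct translations of the density hypotheses on $p$ and $q.$
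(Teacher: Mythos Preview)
Your proposal is correct and follows essentially the same approach as the paper: apply Lemma~\ref{mu_k_series}, take expectations termwise, and evaluate each term using the definitions of $\mu_{k,\vph}$ and the densities $p,q$. The paper's own proof is a single sentence invoking exactly these ingredients, so your write-up is in fact more detailed---in particular, your attention to the interchange of $\mbE$ and the series (via the a.s.\ termination coming from compact support and local finiteness) addresses a point the paper leaves implicit.
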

\begin{proof}
The statement of the lemma follows from lemma \ref{mu_k_series}, definitions of measures $\mu_{k,\varphi}$ and functions $q.$
\end{proof}
Now we can consider  summands from the series in the  statement of lemma \ref{moment_in_nu}  with random stationary measure $\mu$.
Recall, that by definition, random measure that is defined on $\mcB(\mbR)$ is stationary if for any number $m\in\mbN,$  of Borel subsets $\Delta_1, \Delta_2, ... \Delta_m \in \mcB(\mbR)$ and any real nuber $h\in \mbR$ the following equality in distribuion holds
$$
(\mu(\Delta_1),\mu(\Delta_2),\ldots, \mu(\Delta_m)) \overset{d}{=}
(\mu(\Delta_1+h),\mu(\Delta_2+h),\ldots, \mu(\Delta_m+h))
$$
To prove the central limit theorem  for integrals
$$\frac 1 n \int_{\mbR^n}\left(\int_0^n f(v) q(u_*,u^*,v) dv)\right)\mu^{(k)}(d\vec u)$$
  let us rewrite them.\\
  For every $k\geq 2$
$$
\int_{\mbR^k}\left( \int_0^nf(v)q(u_*,u^*,v)dv\right)\mu^{(k)}(d\vec u)=
\int_0^n f(v)\left( \int_{\mbR^k}q(u_*,u^*,v)\mu^{(k)}(d\vec u)\right)dv.
$$

Note that for a case when the random mappting $\vph_{t_1,t_2}$ is taken from Arratia web it can be easily checked that
\begin{enumerate}
    \item $\forall z\in \mbR, \ a\leq b, \ v\in\mbR:$
    $$
    q_{t_2-t_1}(a,b,v)=q_{t_2-t_1}(a+z,b+z,v+z),
    $$
    \item
      $$
    q_{t_2-t_1}(a,b,v)\leq p_{t_2-t_1}(a,v)\wedge p_{t_2-t_1}(b,v),
    $$
    where $p_t(a,\cdot)$ is the Gaussian density with mean $a$ and variance $t.$
\end{enumerate}

\begin{lemma}
\label{xi_process}Suppose that $\mu$ is stationary random point measure. 
Then for every $k\geq2$ the random function
$$
\xi_k(v)=\int_{\mbR^k} q_{t_2-t_1}(u_*,u^*,v)\mu^{(k)}(d\vec u), \ v\in \mbR
$$
is a stationary random measure.
\end{lemma}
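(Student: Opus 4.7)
The plan is to transfer the spatial shift from the argument $v$ onto the random measure $\mu^{(k)}$ and then invoke stationarity of $\mu$. For a fixed $h\in\mbR$, property (1) listed just before the lemma gives $q_{t_2-t_1}(u_*,u^*,v+h)=q_{t_2-t_1}(u_*-h,u^*-h,v)$, so
$$
\xi_k(v+h)=\int_{\mbR^k} q_{t_2-t_1}(u_*-h,u^*-h,v)\,\mu^{(k)}(d\vec u).
$$
Writing $\mu=\sum_i\delta_{u_i}$ and making the change of variables $w_{i_j}=u_{i_j}-h$ in the sum over distinct $k$-tuples which defines $\mu^{(k)}$, this rewrites as
$$
\xi_k(v+h)=\int_{\mbR^k} q_{t_2-t_1}(w_*,w^*,v)\,(T_h\mu)^{(k)}(d\vec w),
$$
where $T_h\mu:=\sum_i\delta_{u_i-h}$ is the spatial shift of $\mu$ by $-h$.

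Next, the stationarity of $\mu$ is exactly the statement $T_h\mu\stackrel{d}{=}\mu$ as random measures on $\mbR$, which immediately yields $(T_h\mu)^{(k)}\stackrel{d}{=}\mu^{(k)}$ as random measures on $\mbR^k$. Applying this distributional identity jointly at arbitrary points $v_1,\ldots,v_m\in\mbR$ through the measurable functional
$$
\nu\;\longmapsto\;\Bigl(\int_{\mbR^k} q_{t_2-t_1}(w_*,w^*,v_j)\,\nu(d\vec w)\Bigr)_{j=1}^m,
$$
one obtains
$$
(\xi_k(v_1+h),\ldots,\xi_k(v_m+h))\stackrel{d}{=}(\xi_k(v_1),\ldots,\xi_k(v_m)),
$$
which is the desired stationarity, in the sense of the definition of stationary random measure recalled earlier applied to $\xi_k(v)\,dv$.

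The only technical step I would need to address carefully is that the integrals defining $\xi_k(v)$ are almost surely finite, so that the change of variables and the pushforward by the functional above are legitimate. Property (2) supplies the Gaussian majorant $q_{t_2-t_1}(u_*,u^*,v)\les p_{t_2-t_1}(u_*,v)\wedge p_{t_2-t_1}(u^*,v)$, which forces $u_*$ and $u^*$ to lie within a window of order $\sqrt{t_2-t_1}$ around $v$; combined with the local finiteness of the atoms of $\mu$, this makes the integrability routine, so the main work of the proof is indeed the symbolic translation argument above rather than an analytic estimate.
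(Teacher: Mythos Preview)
Your proof is correct and follows essentially the same route as the paper: both use the translation invariance of $q_{t_2-t_1}$ (property (1)) to convert the shift in $v$ into a shift of the integrating measure $\mu^{(k)}$, and then invoke stationarity of $\mu$ to conclude that the shifted factorial measure has the same law. Your write-up is somewhat more explicit than the paper's (spelling out the change of variables, the measurable functional giving the joint law, and the a.s.\ finiteness via property (2)), but the underlying argument is the same.
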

\begin{proof}
Denote for any arbitrary $z\in\mbR$ the image of the measure $\mu^{(k)}$ under the transformation
$$
\mbR^k \ni (u_1,\ldots, u_k) \mapsto (u_1+z,\ldots, u_k+z)
$$
by $\mu_z^{(k)}.$ Since $\mu$ is a stationary random measure, then for every $z_1,\ldots, z_n$ and $h$ the sequences of random measures
$$
\left\{\mu_{z_1}^{(k)},\ldots, \mu_{z_n}^{(k)}\right\},  
\left\{\mu_{z_1+h}^{(k)},\ldots, \mu_{z_n+h}^{(k)}\right\}
$$
are equidistributed. Now the statement of the lemma follows from the properties of the function $q_{t_2-t_1}.$
\end{proof}
 
 Similarly to mixing coefficients of the stationary process we can define mixing coefficient of the point measure. 
\begin{definition}
A random stationary measure $\mu$  satisfies mixing condition with the function $\alpha$ if for any $t\in \mbR$ and $h>0$
$$
\sup \Big\{\mbP(A\cap B)-\mbP(A)\mbP(B)|, \ A\in \mcF_{-\infty}^t, \ B\in \mcF_{t+h}^{\infty}\Big\}\leq \alpha(h),
$$
where $\mcF_s^t = \sigma\{\mu(\Delta): \Delta\subset [s,t]\}$
\end{definition}

Now we can prove the central limit theorem for the integrals with respect to process $\xi_k$ that was defined in lemma
\ref{xi_process}.
\begin{theorem}
\label{CLT_factorial_measure}
Let random mapping $\vph_{t_1,t_2}$ be an Arratia web and measure $\mu=N_{t_1}$ is the point measure that corresponds to the Arratia flow at the moment of time $t_1$.
Let  $f\in C(\mbR)$ be a 1-periodic function such that 
$\int_{\mbR}f(u)du=0.$ Then  there exists a Gaussian random variable  $\zeta_k(f)$ such that
$$
\frac{1}{\sqrt n}\int_0^n f(v) \xi_k(v)dv \Rightarrow \zeta_k(f), \ n\to \infty,
$$
where 
$$
\xi_k(v)=\int_{\mbR^k} q_{t_2-t_1}(u_*,u^*,v)\mu^{(k)}(d\vec u), \ v\in \mbR.
$$
\end{theorem}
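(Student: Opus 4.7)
The plan is to reduce the claim to the CLT for $\alpha$-mixing stationary sequences, exactly as in the proof of Theorem \ref{CLT}. Set
$$
\eta_m := \int_m^{m+1} f(v)\,\xi_k(v)\,dv, \qquad m\in\mbZ,
$$
so that the target random variable equals $\frac{1}{\sqrt n}\sum_{m=0}^{n-1}\eta_m$. By Lemma \ref{xi_process} the field $\xi_k$ is stationary, hence $\{\eta_m\}$ is strictly stationary; stationarity also makes $\mbE\xi_k(v)$ constant in $v$, so the $1$-periodicity of $f$ together with $\int_0^1 f=0$ forces $\mbE\eta_m=0$. Finite $(2{+}\delta)$-moments for $\eta_m$ follow from the Gaussian bound $q_{t_2-t_1}(u_*,u^*,v)\le p_{t_2-t_1}(u_*,v)$ combined with the uniform density bound $\rho_{t_1}^{(k)}\le(\pi t_1)^{-k/2}$ of \cite{MRTZ}, integrated against the bounded function $f$.

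The core step is a mixing estimate. Although $\xi_k(v)$ is a functional of $\mu^{(k)}$ on all of $\mbR^k$, the Gaussian tail of $q_{t_2-t_1}$ in $u_*-v$ localizes it: define the truncation
$$
\xi_k^L(v):=\int_{\{|u_*-v|\le L\}} q_{t_2-t_1}(u_*,u^*,v)\,\mu^{(k)}(d\vec u),
$$
and $\eta_m^L := \int_m^{m+1} f(v)\,\xi_k^L(v)\,dv$. Then $\eta_m^L$ is measurable with respect to the restriction of $\mu = N_{t_1}$ to $[m-L,\,m+1+L]$, while a direct computation using the Gaussian bound on $q$ and the uniform bound on $\rho_{t_1}^{(k)}$ gives $\mbE|\eta_m-\eta_m^L|\le C\,e^{-L^2/C'}$. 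The measure $N_{t_1}$ itself satisfies $\alpha$-mixing with Gaussian decay by the same coupling-barrier argument as in Lemma \ref{mixing}, now applied to the indicators $\1_{\{x(u,t_1)\le a\}}$ that generate the $\sigma$-fields $\sigma(N_{t_1}|_I)$. Choosing $L\sim h^{1/2}$ and combining the two estimates yields $\alpha_\eta(h)\le C\,e^{-ch}$.

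With strict stationarity, centring, finite $(2{+}\delta)$-moments, and super-polynomial mixing in hand, the Ibragimov--Linnik CLT \cite{IbLin} applies and produces $\frac{1}{\sqrt n}\sum_{m=0}^{n-1}\eta_m\Rightarrow \zeta_k(f)$, where $\zeta_k(f)$ is a centred Gaussian variable with variance $\sum_{m\in\mbZ}\cov{\eta_0}{\eta_m}$; the absolute convergence of this series is an immediate byproduct of the mixing bound and the $(2{+}\delta)$-moment bound. The principal technical obstacle is transferring mixing through several layers: from the Arratia-flow trajectories (where Lemma \ref{mixing} is stated for characteristic-function functionals) to the point measure $N_{t_1}$, and then, via the truncation $\xi_k^L$, to the sequence $\{\eta_m\}$, while ensuring that the Gaussian decay is preserved at each stage and that the $L^1$-truncation error can be absorbed into the mixing coefficient by a careful choice of $L = L(h)$.
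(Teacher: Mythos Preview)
Your overall strategy parallels the paper's, but the crucial step is handled differently, and as written it has two gaps.

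First, the truncation set $\{|u_*-v|\le L\}$ constrains only $u_*=\min_i u_i$; since $u^*=\max_i u_i$ is left unbounded, $\eta_m^L$ is \emph{not} measurable with respect to $\mu|_{[m-L,\,m+1+L]}$ as you claim. The paper truncates to $\{v-m\le u_*\le u^*\le v+m\}$, which bounds all coordinates of $\vec u$ and gives genuine local measurability.

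Second, and more substantively, you propose to obtain $\alpha$-mixing for $\{\eta_m\}$ itself by ``absorbing the $L^1$-truncation error into the mixing coefficient''. This is not automatic: the $\alpha$-coefficient is a supremum over events in the $\sigma$-fields generated by \emph{all} $\eta_m$ with $m\le 0$ (resp.\ $m\ge h$), and a bound $\mbE|\eta_m-\eta_m^L|\le Ce^{-cL^2}$ on individual coordinates does not by itself control $|\mbP(A\cap B)-\mbP(A)\mbP(B)|$ for such $A,B$. Making this rigorous would require a coupling/total-variation argument or an appeal to near-epoch-dependence machinery, neither of which you sketch.

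The paper sidesteps the issue. Instead of proving mixing for $\{\eta_l\}$, it applies the Ibragimov--Linnik CLT to each truncated sequence $\{\eta_l^m\}_{l\ge0}$, which \emph{is} $\alpha$-mixing with coefficient $\alpha(\cdot-2m)$ inherited from $\mu$, obtaining Gaussian limits $\varkappa_m$. It then proves the uniform $L^2$ bound
\[
\sup_{n\ge 1}\ \mbE\Bigl(\tfrac{1}{\sqrt n}\sum_{l=0}^{n-1}(\eta_l-\eta_l^m)\Bigr)^2 \longrightarrow 0 \qquad (m\to\infty),
\]
by expanding the square and using $q_{t_2-t_1}(a,b,v)\le p_{t_2-t_1}(a,v)\wedge p_{t_2-t_1}(b,v)$ together with the uniform density estimates for $\mu^{(k)}$. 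This uniformity in $n$ is exactly what permits interchanging the limits $n\to\infty$ and $m\to\infty$ and concluding the CLT for $\{\eta_l\}$, without ever establishing its own mixing coefficient. That double-limit scheme is both shorter and avoids the $\sigma$-field manipulation your outline would require.
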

\begin{proof}
Define random variables
$$
\eta_l=\int_l^{l+1} f(v)\xi_k(v) dv, \ l=0,1,\ldots.
$$
This sequence is stationary. Check that $\eta_0$ is centered.
$$
\mbE \eta_0 = \int_0^1 f(v) \mbE \xi_k(v)dv= \mbE\xi_k(0)\int_0^1 f(v)dv=0.
$$
Now check the boundedness of moments. Consider
\begin{multline*}
    \mbE |\int_0^1 f(v) \xi_k(v) dv|^n \leq\\ 
    \leq \int_0^1\ldots \int_0^1 |f(v_1)|\ldots |f(v_n)| 
    \mbE \prod_{j=1}^n \int_{\mbR^k} q_{t_k-t_1}(u_*, u^*, v_j)
    \mu^{(k)} (d\vec u) d\vec v\leq\\
    \leq c_1 \mbE \prod_{j=1}^n \int_{\mbR^k} e^{-c_1\|\vec u \|^2}\mu^{(k)}(d\vec u)<\infty. 
\end{multline*}
The boundedness of the last expression follows from the lemmas about the moments  of integrals with respect to factorial measures and estimations of the $n-$point densities for the point measure of the Arratia flow.

To establish now the central limit theorem let us use approximations of the random processes $\xi_k.$ For every $n\geq1$ define 
$$
\xi_k^m(v) = \int_{\mbR^k}q_{t_2-t_1} (u_*, u^*,v)
\1_{\{v-m\leq u_*\leq u^*\leq v+m\}}\mu^{(k)}(d\vec u).
$$
Note that the sequence  of random variables 
$$
\eta_l^m=\int_{l}^{l+1} f(v) \xi_k^m(v) dv,  \ l\geq 0
$$
satisfies mixing condition with the function $\alpha(\cdot -2m),$ where $\alpha$ is the function from mixing condition for measure $\mu.$
Due to estimation of this function for the point measure from the Arratia flow one can conclude that the following convergence holds
$$
\frac{1}{\sqrt n} \sum_{l=0}^{n-1} \eta_l^m \Rightarrow \varkappa_m, \ \ n\to \infty.
$$
Here $\varkappa_m$ is centered Gaussian variable with 
$$
\mbE \varkappa_m^2 = \int_0^1\int_0^1 f(v_1)f(v_2) 
\mbE\prod_{j=1}^2\int_{\mbR^k}q_{t_2-t_1}(u_*, u^*,v_j)\1_{v_j-m\leq u_*\leq u^*\leq v_j+m}\mu^{(k)}(d\vec u).
$$

For  any $ v \in \mbR$ and $m\geq 1$ denote  
$$B_m(v) = \{\vec u \in \mbR^k: \  v-m\leq u_*\leq u^* \leq u+m \}.$$

Then  for every $m\geq 1$ 
\begin{multline*}
    \mbE\left(\frac{1}{\sqrt n} \sum_{l=0}^{n-1} \left(\eta_l-\eta^{m}_l\right)\right)^2=
    \frac 1 n \mbE\left(\int\limits_0^n  \int\limits_{\mbR^k\setminus B_m(v)}f(v) q_{t_2-t_1}(u_*,u^*,v)
    \mu^{(k)}(d\vec u) dv\right)^2=\\
    =\frac 1 n \int\limits_0^n \int\limits_0^n f(v_1)f(v_2) \mbE\int\limits_{\mbR^k\setminus B_m(v_1)}\int\limits_{\mbR^k\setminus B_m(v_2)}q_{s}(u_{1*},u_1^*,v_1)
    q_{t_2-t_1}(u_{2*},u_2^*,v_2)\cdot\\
    \cdot\mu^{(k)}(d\vec u_1) \mu^{(k)}(d\vec u_2) dv_1dv_2.
\end{multline*}

Consider mathematical expectation in the last expression. Using that  
$$    q_{t_2-t_1}(a,b,v)\leq p_{s}(a,v)\wedge p_{s}(b,v),
    $$
    where $p_t(a,\cdot)$ is the Gaussian density with mean $a$ and variance $s,$ and the uniform upper bound on the $k-$point densities of the point measure $\mu$ we get
\begin{multline*}
 \mbE\int\limits_{\mbR^k\setminus B_m(v_1)}\int\limits_{\mbR^k\setminus B_m(v_2)}q_{t_2-t_1}(u_{1*},u_1^*,v_1)
    q_{t_2-t_1}(u_{2*},u_2^*,v_2)
    \mu^{(k)}(d\vec u_1) \mu^{(k)}(d\vec u_2)\leq\\
    \leq C e^{-2\max(2m, |v_1-v_2|)}.
\end{multline*}
Substitute this upper bound into the expression for $ \mbE\left(\frac{1}{\sqrt n} \sum_{l=0}^{n-1} \left(\eta_l-\eta^{m}_l\right)\right)^2$ we get

\begin{multline*}
    \mbE\left(\frac{1}{\sqrt n} \sum_{l=0}^{n-1} \left(\eta_l-\eta^{m}_l\right)\right)^2
  \leq  C \frac 1 n \int\limits_0^n \int\limits_0^n  e^{-\max(2m, |v_1-v_2|)} dv_1dv_2=\\=\frac {2C}{n}\int_0^n e^{-\max(2m, r)}dr \to 0, \text{ as } m\to \infty.
\end{multline*}
From this we conclude that there exists $\sigma^2$ such that $\mbE \varkappa_m^2 \to \sigma^2$ as $m\to \infty$ and 
$$\varkappa_m \Rightarrow \varkappa,$$
where $\varkappa$ is Gaussian random variable with zero mean and variance $\sigma^2.$ Finally, from this follows
$$
\frac{1}{\sqrt n} \sum_{l=0}^{n-1} \eta_l\Rightarrow \zeta_k, \ \ n\to \infty
$$
and this proves the statement of the lemma.
\end{proof}

Putting together lemma (\ref{moment_in_nu}) and theorem (\ref{CLT_factorial_measure}) we get
\begin{theorem}
Let  $f\in C(\mbR)$ be a 1-periodic function such that 
$\int_{\mbR}f(u)du=0$ and $(N_t)_{t\geq 0}$ is the point measure  that corresponds to the Arratia flow $\{x(u,t),\ u\in \mbR, \ t\geq 0\}.$ Then 
for $t_1<t_2$
$$
\frac 1 {\sqrt n} \mbE \left(\int_0^n f(u) N_{t_2}(du)\big| \mcF_{t_1}^x\right) \Rightarrow  \sum_{k=1}^{\infty} (-1)^{k+1}\frac 1 {k!} \zeta_k(f), \text{ as } n\to \infty
$$
where $\zeta_k(f)$ is centered Gaussian variable with 
$$
   \mbE \zeta_k(f)^2 = \int_0^1\int_0^1 \int_{\mbR^k}f(v_1)f(v_2) 
\prod_{j=1}^2q_{t_2-t_1}(u_*, u^*,v_j)\rho_{t_1}^{(k)}(\vec u)(d\vec u)
$$
\end{theorem}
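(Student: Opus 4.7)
The plan is to combine the series expansion of Lemma \ref{mu_k_series}, the conditional-expectation formula of Lemma \ref{moment_in_nu}, and the CLT of Theorem \ref{CLT_factorial_measure} applied termwise, together with a truncation-and-tail argument to upgrade to convergence of the full series.

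First, the Brownian-web representation of the Arratia flow yields $N_{t_2} = (N_{t_1}\circ \varphi_{t_1,t_2}^{-1})_*$, where $\varphi_{t_1,t_2}$ is a random non-decreasing map that, by the Markov property of the flow, is independent of $\mcF_{t_1}^x$. Applying Lemma \ref{mu_k_series} with $\mu = N_{t_1}$ and $\varphi = \varphi_{t_1,t_2}$ and then conditioning on $\mcF_{t_1}^x$ (exploiting that $N_{t_1}$ is $\mcF_{t_1}^x$-measurable while $\varphi_{t_1,t_2}$ is independent of it), Lemma \ref{moment_in_nu} delivers
\[
\mbE\!\left(\int_0^n f(u)\,N_{t_2}(du)\,\big|\,\mcF_{t_1}^x\right) = \sum_{k=1}^\infty \frac{(-1)^{k+1}}{k!}\int_0^n f(v)\,\xi_k(v)\,dv,
\]
where $\xi_k(v) = \int_{\mbR^k}q_{t_2-t_1}(u_*,u^*,v)\,N_{t_1}^{(k)}(d\vec u)$ for $k\ge 2$ (with the obvious $k=1$ modification using the Gaussian transition density $p_{t_2-t_1}$). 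The hypothesis $\int_0^1 f = 0$ is precisely the centering needed to place each of these processes into the framework of Theorem \ref{CLT_factorial_measure}.

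Second, Theorem \ref{CLT_factorial_measure} (with a straightforward extension to $k=1$) gives $\frac{1}{\sqrt n}\int_0^n f(v)\xi_k(v)\,dv \Rightarrow \zeta_k(f)$ for every $k$. Joint convergence of any finite prefix is obtained by rerunning the mixing-CLT argument on the stationary sequence $\eta_l^{(K,\vec c)} = \sum_{k=1}^K c_k\int_l^{l+1} f(v)\xi_k(v)\,dv$, which inherits stationarity, finite third moments, and the Brownian-web mixing bound with $K$-dependent constants. Hence for each fixed $K$ the truncated partial sum
\[
S_n^K := \frac{1}{\sqrt n}\sum_{k=1}^K \frac{(-1)^{k+1}}{k!}\int_0^n f(v)\xi_k(v)\,dv \;\Rightarrow\; S^K := \sum_{k=1}^K \frac{(-1)^{k+1}}{k!}\zeta_k(f),
\]
which is centered Gaussian by the Cramér--Wold device.

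The main obstacle is the uniform-in-$n$ tail control needed to pass $K\to\infty$. I would bound
\[
\mbE\,|S_n - S_n^K|^2 \le \sum_{k,j > K} \frac{1}{k!\,j!}\cdot\frac{1}{n}\int_0^n\!\int_0^n |f(v_1)f(v_2)|\,\mbE\bigl[\xi_k(v_1)\xi_j(v_2)\bigr]\,dv_1 dv_2,
\]
and apply the pointwise bound $q_{t_2-t_1}(u_*,u^*,v)\le p_{t_2-t_1}(u_*,v)$ together with the uniform $n$-point density estimate $\rho_{t_1}^{(n)} \le (\pi t_1)^{-n/2}$ of \cite{MRTZ}. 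After integrating the Gaussian kernels in $v_1,v_2$ and estimating the resulting factorial-measure expectations, the factors $1/(k!\,j!)$ dominate the combinatorial weights produced by expanding the $\mu^{(k)}\otimes\mu^{(j)}$ integrals, so $\sup_n \mbE|S_n - S_n^K|^2 \to 0$ as $K\to\infty$. Combined with the Gaussianity of the $S^K$ and the resulting $L^2$-Cauchy property of $\{S^K\}_K$, standard approximation (Billingsley's Theorem~3.2) permits interchange of the limits $n\to\infty$ and $K\to\infty$, yielding $S_n \Rightarrow \sum_{k=1}^\infty \frac{(-1)^{k+1}}{k!}\zeta_k(f)$. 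The explicit variance identity for $\zeta_k(f)$ is then obtained by passing to the limit in the expression for $\mbE\varkappa_m^2$ established in the proof of Theorem \ref{CLT_factorial_measure}.
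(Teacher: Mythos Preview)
Your proposal is correct and follows exactly the route the paper indicates: the paper's entire proof consists of the sentence ``Putting together lemma (\ref{moment_in_nu}) and theorem (\ref{CLT_factorial_measure}) we get'' followed by the statement, so you have in fact supplied the argument the paper omits. In particular, your truncation-and-tail control (uniform in $n$ via the Gaussian bound $q_{t_2-t_1}\le p_{t_2-t_1}$ and the $\rho^{(k)}$ estimates) and your use of Cram\'er--Wold for joint convergence of the finite prefixes are precisely the missing steps needed to make the paper's one-line reduction rigorous.
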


\section{ Convergence of integrals in functional spaces.}
Note that the second part of the theorem  \ref{CLT} states that for fixed function $f$ there exists the limiting Gaussian process indexed by time $t\geq 0$. This process defined by it's finite-dimensional distributions with covariance matrix $\Sigma_{t_1,\ldots, t_m}$.
In this section we consider the  sequence of processes  $\{X_t^n, \ t\in [t_0, T]\} $  and study its convergence in the space of continuous functions. The second statement of the theorem \ref{CLT}, gives us the convergence   of finite-dimensional distributions of this sequence to Gaussian distribution. 

 In the sequel, together with the Arratia flow we use notion of the dual flow $\{y(u,s), \ u\in \mbR, s\in [0,t]\}$ for the Arratia flow (see, for example, \cite{Dorogovtsev,TothWerner}). The dual flow moves in the backward time and its trajectories do not cross trajectories of the Arratia flow.
The Arratia flow and its dual flow exist as weak limits of scaled random walks moving forward and backward in time, respectively, whose trajectories do not intersect (\cite{Arr}).  The dual  Arratia flow can be  constructed using a Brownian web [11]. Assume that the Arratia flow $x(u,t)$ is a Brownian web that starts from time $s=0,$i.e. $x(u,t)=\varphi_{0,t}(u).$ Then for fixed $t>0$  a dual flow for the the Arratia flow can be defined as follows: for any $s\in [0;t],$
$$
y(u,t-s) = \inf\{\varphi_{r,s}(v): \varphi_{r,t}(v), \ v\in \mathbb Q, \ r\in \mathbb Q\cap [0;t]\} \ \text{a.s.}
$$
Now we find out conditions on a function $f$ under which the processes\\ $\left\{X_t^n(f) = \frac 1 {\sqrt n}\int_0^n f(u)N_t(du), \ t\in [t_0, T] \right\},$ $(n\geq 1)$ have continuous modification.
\begin{lemma}
\label{conv_in_D_1}
Let $f\in C^1(\mbR)$ be a   function such that
$\text{supp}f|_{[0;1]} \subset [\ve, 1-\ve]$ and $\int_0^1 f(u)du = 0.$
Then for every $0<t_0<t_1<t$ and $k= 3,4,...$
$$
\mbE|X_t^1(f) - X_{t_1}^1(f)|^{k}\leq C_{t_0, \ve,k,f} \left[(t - t_1)^{2} + (t - t_1)^{ k}\right],
$$
where the constant $C$ depends on  $t_0,\  \ve,\ k,\ f.$
\end{lemma}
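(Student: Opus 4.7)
The plan is to decompose $X_t^1(f)-X_{t_1}^1(f)$ by following the dynamics of the Arratia flow on $[t_1,t]$: continuous motion of the particles of $N_{t_1}$ and discrete coalescence events. First I would introduce a good event $A=A_{t-t_1}$ on which every trajectory $x(y,\cdot)$ with $y\in N_{t_1}\cap[-1,2]$ moves by at most $\ve/2$ during $[t_1,t]$. Gaussian tails for individual Brownian trajectories combined with the bound $\rho_{t_1}^{(1)}\leq 1/\sqrt{\pi t_0}$ give $\mbP(A^c)\leq C_{\ve,t_0}\exp(-c\ve^2/(t-t_1))$. The uniform $L^p$ bounds on $X_t^1(f)$ and $X_{t_1}^1(f)$ from Lemma~\ref{moments_int}, together with H\"older's inequality, absorb the $A^c$ contribution into the $(t-t_1)^k$ term.

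On $A$, the support condition on $f$ ensures that only particles of $N_{t_1}\cap[\ve/2,1-\ve/2]$ are relevant and they remain in $[0,1]$ until time $t$. Writing $\vph_{t_1,t}$ for the Brownian-web map and $C(y)$ for the cluster at time $t$ of the trajectory started at $y$, one has $X_t^1(f)-X_{t_1}^1(f)=I-II$, where
\[
I=\!\!\sum_{y\in N_{t_1}\cap[0,1]}\!\!\bigl[f(\vph_{t_1,t}(y))-f(y)\bigr]\,\1_{\{y=\min C(y)\}},\quad
II=\!\!\sum_{y\in N_{t_1}\cap[0,1]}\!\!f(y)\,\1_{\{y\neq \min C(y)\}}.
\]
For $II$, the probability that two particles at distance $d$ meet during $[t_1,t]$ is bounded by $2\Phi(-d/\sqrt{2(t-t_1)})$, exponentially small in $d^2/(t-t_1)$; combining this with the uniform bound on $\rho_{t_1}^{(j)}$ and expanding $\mbE|II|^k$ via Lemma~\ref{moments_int} yields integrals against products of meeting-indicators whose one-pair, two-pair, $\ldots$, $k$-pair contributions combine to give $\mbE|II|^k\leq C[(t-t_1)^2+(t-t_1)^k]$.

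For $I$, Taylor with integral remainder gives $f(\vph_{t_1,t}(y))-f(y)=W_y\int_0^1 f'(y+sW_y)\,ds$ with $W_y=\vph_{t_1,t}(y)-y$; split $I=I_1+I_2$ with $I_1=\sum_y f'(y)W_y\,\1_{\{y=\min C(y)\}}$ and $I_2$ the remainder. The remainder picks up an extra factor $\omega_{f'}(|W_y|)\leq\omega_{f'}(\ve/2)$ on $A$, and the factorial-moment expansion gives $\mbE|I_2|^k\leq C[(t-t_1)^2+(t-t_1)^k]$. For $I_1$, the crucial input is $\int_0^1 f'(u)\,du=f(1)-f(0)=0$, coming from the compact support of $f$ inside $[\ve,1-\ve]$: together with the stationarity of $N_{t_1}$ and the correlation structure of the coalescing BM increments $W_y$, it cancels the leading contribution of $\sum_y f'(y)W_y$ and brings the $L^k$-estimate down to order $(t-t_1)^2$.

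The main obstacle is precisely this $(t-t_1)^2$ bound for $I_1$: a naive Burkholder-type estimate treating the $W_y$ as independent Gaussians only yields $(t-t_1)^{k/2}$, which is weaker than $(t-t_1)^2$ for $k=3$. The necessary improvement relies on combining the zero-mean condition on $f'$ with the correlation structure of the coalescing flow, bringing into play the two-point correlation $g_{t_1}$ from (\ref{var}). I expect the detailed analysis to reduce to bounding joint moments of $N_{t_1}$ and $\vph_{t_1,t}$ via the Brownian web, with the desired $(t-t_1)^2$ scaling emerging through a second-order cancellation in the summation over the stationary point process.
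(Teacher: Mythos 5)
Your decomposition is genuinely different from the paper's: the paper transports every point of $N_{t_1}$ forward, writing $X_t^1(f)$ as $\int_{\mbR} f(\varphi_{t_1,t}(u))\,N_{t_1}(du)$, so a coalescence term never appears there; your splitting into $I$ (motion of cluster minima) and $II$ (loss of the coalesced points) is the one faithful to the definition of $N_t$ as a simple counting measure. But precisely at $II$ your argument has a genuine gap. A coalescence among the particles of $N_{t_1}$ lying in $\supp f$ occurs with probability of order $t-t_1$ (the expected number of particles per unit length drops from $1/\sqrt{\pi t_1}$ to $1/\sqrt{\pi t}$, a deficit $\asymp t-t_1$; equivalently, the one-pair term of your expansion is $\int\!\!\int \rho^{(2)}_{t_1}(v_1,v_2)\,\mbP\{\text{meeting}\}\,dv_1dv_2 \asymp \int_0^\infty r\,e^{-r^2/4(t-t_1)}dr \asymp t-t_1$), and on that event $II=f(y)$ for the absorbed particle $y$, a quantity of order $1$. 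The condition $\int_0^1 f=0$ makes $\mbE\, II=0$ but cannot reduce $\mbE|II|^k$, since the location of the coalescing pair is essentially uniform and a single such event already contributes $|f(y)|^k$. Hence $\mbE|II|^k\asymp t-t_1$, not $O\left((t-t_1)^2+(t-t_1)^k\right)$ as you assert, and no estimate of the shape you claim can be recovered for this term; this is where your route cannot be closed.

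The second gap is the $(t-t_1)^2$ bound for $I_1=\sum_y f'(y)W_y\1_{\{y=\min C(y)\}}$ when $k=3$: you correctly observe that the naive estimate only gives $(t-t_1)^{k/2}$, which for $k=3$ exceeds $(t-t_1)^2+(t-t_1)^3$, but the ``second-order cancellation'' that is supposed to repair this is only conjectured, not constructed. For comparison, the paper bounds its transported-mass term by exactly $(t-t_1)^{k/2}$ and its boundary term by $(t-t_1)^2$, which matches the stated inequality only for $k\ge 4$ (the case actually used later, in Theorem \ref{continuity_inequality}); so the hard point you flag for $k=3$ is not resolved by the paper either, and it is not resolved by you. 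As it stands, your proposal identifies the right objects but does not establish the inequality: the estimate of $II$ is wrong in order of magnitude, and the key refinement for $I_1$ is left as a hope.
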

\begin{proof}
Now
\begin{multline*}
X_{t_1}^1(f) - X_{t}^1(f) = \int_0^1 f(u) N_{t_1}(du) - \int_0^1 f(u) N_t(du) = \\
=\int_0^1 \left[ f(u) -f(\varphi_{t_1, t}(u))\right]N_{t_1}(du) - \int_{\mbR\setminus [0;1]} f(\varphi_{t_1,t}(u)) N_{t_1} (du)
\end{multline*}
 The upper bound for $k-$th moment ($k\in \mbZ$) of the  first summand
 \begin{multline*}
\mbE \left|\int_0^1 \left[ f(u) -f(\varphi_{t_1, t}(u))\right]N_{t_1}(du) \right|^{k}\leq\\
\leq \mbE \int_0^n {\substack{k \\ \ldots}} \int_0^n
\prod_{i=1}^k|f(u_i) - f(\varphi_{t_1,t}(u_i))|N_{t_1}(du_1)\ldots N_{t_1}(du_k)=\\
=\mbE \int_0^n {\substack{k \\ \ldots}} \int_0^n
 h(\vec u; t_1, t)N_{t_1}(du_1)\ldots N_{t_1}(du_k),
\end{multline*}
where
 \begin{multline*}
h(\vec u; t_1, t)=\mbE\prod_{i=1}^k|f(u_i) - f(\varphi_{t_1,t}(u_i))|\leq
\prod_{i=1}^k\left(\mbE|f(u_i) - f(\varphi_{t_1,t}(u_i))|^k\right)^{1/k}.
\end{multline*}
Denote by $g(\vec u) =\left(\mbE|f(u) - f(\varphi_{t_1,t}(u))|^k\right)^{1/k}.$ Then we can continue the estimation using lemma \ref{moments_int} and uniform upper bound for $\rho^{(k)}$
\begin{multline*}
\mbE \int_0^n {\substack{k \\ \ldots}} \int_0^n
 h(\vec u; t_1, t)N_{t_1}(du_1)\ldots N_{t_1}(du_k)\leq
 \mbE \left(\int_0^1 g(u; t_1, t)N_{t_1}(du)\right)^k\leq\\
 \leq
 C_{t_1}\int_0^1 g^k(u; t_1, t)du=
 C_{t_1} \int_0^1 \mbE|f(u) - f(u+w(t-t_1))|^k du = \\=
 C_{t_1} \int_0^1 \int_{\mbR}\left(f(u) - f(u+\sqrt{t-t_1}y)\right)^k \frac{1}{\sqrt{2\pi}} e^{-y^2/2}dy du\leq \\
 \leq M{C}_{t_1}\int_0^1 \int_{\mbR}(\sqrt{t-t_1})^k y^k \frac{1}{\sqrt{2\pi}} e^{-y^2/2}dy du=M\widetilde{C}_{t_1}(t-t_1)^{k/2},
\end{multline*}
where $M=\sup_{u\in [0,1]} f'(u)$ and the constant $C_{t_1}\leq \frac {\text{const}}{t_0^{k/2}}$ for $t_0<t_1<t.$
Using this upper bound we get
\begin{multline*}
\mbE\left|\int_0^1\big(f(u)-f(\varphi_{t_1,t}(u))\big)N_{t_1}(du)\right|^{k/2}
\left|\int_0^1\big(f(u)-f(\varphi_{t,t_2}(u))\big)N_{t}(du)\right|^{k/2}\leq\\
\leq
\left(\mbE\left|\int_0^1\big(f(u)-f(\varphi_{t_1,t}(u))\big)N_{t_1}(du)\right|^{k}\right)^{1/2}
\left(\mbE\left|\int_0^1\big(f(u)-f(\varphi_{t,t_2}(u))\big)N_{t}(du)\right|^{k}\right)^{1/2}\leq\\
\leq C\left((t-t_1)^{k/2}(t_2-t)^{k/2}\right)^{1/2}\leq
C(t_2-t_1)^{k/2},
\end{multline*}

Consider now
$$\int_{\mbR\setminus[0;1]} f(\varphi_{t_1,t}(u) N_{t_1} (du)= \int_1^{\infty}f(\varphi_{t_1,t}(u) N_{t_1} (du)+\int_{-\infty}^0f(\varphi_{t_1,t}(u) N_{t_1} (du).$$
Let us estimate the first summand in this expression. The second one can be estimated similarly.
\begin{multline*}
\mbE\left|\int_1^{\infty}f\big(\varphi_{t_1,t}(u)\big)N_{t_1}(du)\right|^k\leq
\mbE \int_1^{\infty} {\substack{k \\ \ldots}} \int_1^{\infty}\prod_{i=1}^k|f\big(\varphi_{t_1,t}(u_i)\big)|N_{t_1}(du_1)\ldots N_{t_1}(du_k)\leq\\
\leq \mbE \int_1^{\infty} {\substack{k \\ \ldots}} \int_1^{\infty}r(\vec u; t_1,t)N_{t_1}(du_1)\ldots N_{t_1}(du_k),
\end{multline*}
where $r(\vec u; t_1, t) = \mbE\prod_{i=1}^k|f\big(\varphi_{t_1,t}(u_i)\big)|.$ Since $\text{supp}f\subset [\ve, 1-\ve]$ and $f$ is bounded we have
\begin{multline*}
r(\vec u, t_1, t) \leq  \left(\prod_{i=1}^k\mbE|f\big(\varphi_{t_1,t}(u_i)\big)|^k\right)^{1/k}
\leq
C\left(\prod_{i=1}^k\mbP\left\{\varphi_{t_1,t}(u_i)\in [\ve, 1-\ve]\right\}\right)^{1/k}\leq\\
\leq C\left(\prod_{i=1}^k\mbP\left\{\sup_{s\in[0;t-t_1]} w(s)>u_i-1+\ve\right\}\right)^{1/k},
\end{multline*}
where $w$ is a Wiener process, $w(0)=0.$
Using  estimation
$$
\mbP\left\{\sup_{s\in[0;t-t_1]} w(s)>u-1+\ve \right\}\leq
\sqrt{\frac {2}{\pi}}\exp\left\{-\frac{(u-1+\ve)^2}{2(t-t_1)}\right\}\frac{\sqrt{t-t_1}}{u-1+\ve}
$$
and lemma \ref{moments_int}
\begin{multline*}
\mbE\left|\int_1^{\infty}f(\varphi_{t_1,t}(u))N_{t_1}(du)\right|^k\leq
 \mbE \int_1^{\infty} {\substack{k \\ \ldots}} \int_1^{\infty}r(\vec u; t_1,t)N_{t_1}(du_1)\ldots N_{t_1}(du_k)\leq\\
\leq\mbE \int_1^{\infty} {\substack{k \\ \ldots}} \int_1^{\infty}\left(\prod_{i=1}^k\sqrt{\frac {2}{\pi}}\exp\left\{-\frac{(u_i-1+\ve)^2}{2(t-t_1)}\right\}\frac{\sqrt{t-t_1}}{u_i-1+\ve}
\right)^{1/k}N_{t_1}(du_1)\ldots N_{t_1}(du_k)\leq\\
\leq C_{t_1} \int_1^{\infty} \sqrt{\frac {2}{\pi}}\exp\left\{-\frac{(u-1+\ve)^2}{2(t-t_1)}\right\}\frac{\sqrt{t-t_1}}{u-1+\ve}
du\leq \text{const} C_{t_1} e^{-\ve^2/2(t-t_1)}(t-t_1)/\ve^2\leq\\
\leq C_{t_0, \ve}(t-t_1)^2
\end{multline*}
\end{proof}

The estimation form the previous lemma allows to apply Kolmogorov theorem. So $\int_0^1 f(u) N_t(du)$ has continuous modification under some conditions on $f.$ To prove that   $\int_0^k f(u) N_t(du)$ has continuous modification we need to get similar estimation for this process. To do this we need following lemma.
\begin{lemma}
\label{conv_in_D_n}
Let $F$ be a function that depends on set of random points $x([0;1],t)$ such that
$$
\mbE F\left(x([0;1],t)\right) = 0  \ \text{ and }  \ \mbE F^8\left(x([0;1],t)\right)<\infty
$$
Then for any $k_1<k_2<k_3<k_4$ the following upper bound holds
\begin{multline*}
\mbE\prod_{i=1}^4 F\left(x([k_i,k_i+1],t)\right)\leq\\ \leq
\left( \mbE F^{4}(x([0;1],t))\right)^{1/2}e^{-(k_2-k_1-1)^2/8t} e^{-(k_4-k_3-1)^2/8t}+\\+
\left( \mbE F^{8}(x([0;1],t))\right)^{1/2}e^{-(k_2-k_1-1)^2/24t}e^{-(k_3-k_2-1)^2/24t} e^{-(k_4-k_3-1)^2/24t}.
\end{multline*}
\end{lemma}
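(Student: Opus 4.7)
The engine of the proof is the coalescing structure of the Arratia flow: trajectories started from disjoint spatial regions evolve as independent Brownian motions until the first time they meet, after which they coincide. Consequently, the random variables $F_i:=F(x([k_i,k_i+1],t))$ become independent on the event that no trajectory from $[k_i,k_i+1]$ has met one from $[k_{i+1},k_{i+1}+1]$ by time $t$. I would begin by introducing, for $i=1,2,3$, the separation events
$$A_i=\{x(k_i+1,t)<x(k_{i+1},t)\},\qquad d_i:=k_{i+1}-k_i-1.$$
Since $x(k_{i+1},\cdot)-x(k_i+1,\cdot)$ coincides, up to the first meeting time, with a Brownian motion of variance $2t$ starting from $d_i$, the reflection principle yields $\mbP(\bar A_i)\le C\exp(-d_i^2/(4t))$.

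Via the Brownian-web construction, on $A_i$ one can realise the restriction of $x$ to $(-\infty,k_i+1]\times[0,t]$ and the restriction to $[k_{i+1},\infty)\times[0,t]$ as two independent Arratia flows. Applying this simultaneously across two gaps ($A_1\cap A_3$) or across all three ($A_1\cap A_2\cap A_3$) gives two couplings in which, respectively, the blocks $(F_1),(F_2,F_3),(F_4)$ are mutually independent, and $(F_1),(F_2),(F_3),(F_4)$ are mutually independent. This is the key mechanism by which the hypothesis $\mbE F=0$ will force cancellations.

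For the first summand of the target bound I would decompose
$$\mbE\prod_{i=1}^4 F_i=\mbE\Bigl[\prod_i F_i\,\1_{A_1\cap A_3}\Bigr]+\mbE\Bigl[\prod_i F_i\,\1_{(A_1\cap A_3)^c}\Bigr],$$
replace the first piece by its coupled version, and use $\mbE\tilde F_1=\mbE\tilde F_4=0$ together with the three-block independence to see that the unconstrained expectation vanishes. The residue then lives on $\bar{\tilde A_1}\cup\bar{\tilde A_3}$ and, by Cauchy--Schwarz combined with the in-coupling independence of $\tilde F_1$ and $\tilde F_4$ from the middle block, is controlled in terms of $(\mbE F^4)^{1/2}$ and the square roots of $\mbP(\bar A_1)$ and $\mbP(\bar A_3)$. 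The residual term from $(A_1\cap A_3)^c$ is handled identically. Multiplying the two separated gap contributions produces the first summand with its $e^{-d_1^2/(8t)}e^{-d_3^2/(8t)}$ factor.

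For the second summand I would instead decompose through the finer event $A_1\cap A_2\cap A_3$, so that under the corresponding three-gap coupling all four $\tilde F_i$ become mutually independent and centred, making $\mbE\prod\tilde F_i=0$. The three residual terms coming from $\bar A_1,\bar A_2,\bar A_3$ are then estimated by H\"older with exponents balanced so that each gap probability enters with exponent $1/6$ (the exponent $1/24=1/4\cdot 1/6$ of the statement), at the cost of replacing $(\mbE F^4)^{1/2}$ by $(\mbE F^8)^{1/2}$. Summing the two bounds gives the inequality. The main obstacle is the propagation of independence through several gaps simultaneously: even in the coupling the indicators $\1_{A_i}$ mingle left- and right-flows, so one must carefully peel off the non-coalescence errors and arrange the H\"older splittings so that the three gap probabilities factorise in the exact form demanded by the statement — which is precisely where the jump from the fourth- to the eighth-moment hypothesis becomes unavoidable.
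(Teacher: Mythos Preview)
Your coupling idea and the use of $\mbE F=0$ are the right ingredients, but the architecture of the argument is off in two places.

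\textbf{First, the logical structure.} You propose two \emph{separate} decompositions of the same expectation $\mbE\prod_i F_i$---one through $A_1\cap A_3$ and another through $A_1\cap A_2\cap A_3$---and then ``sum the two bounds''. That cannot be right: each decomposition already accounts for all of $\mbE\prod_i F_i$, so summing two complete bounds overcounts. In the paper a \emph{single nested} decomposition is used. One first splits on the \emph{middle} gap $A_2$; the two summands of the statement then emerge from the two branches $A_2$ and $\bar A_2$ of this one split, not from two parallel arguments.

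\textbf{Second, and more substantively, the first summand.} You split on the \emph{outer} gaps $A_1\cap A_3$ and claim that the error, which lives on the \emph{union} $\bar A_1\cup\bar A_3$, is ``controlled in terms of $(\mbE F^4)^{1/2}$ and the square roots of $\mbP(\bar A_1)$ and $\mbP(\bar A_3)$'', after which ``multiplying the two separated gap contributions produces'' the factor $e^{-d_1^2/(8t)}e^{-d_3^2/(8t)}$. But a union does not give you a product: on $\bar A_1\setminus\bar A_3$ Cauchy--Schwarz yields only the $d_1$ factor, and the coupling independence you invoke has been destroyed precisely on $\bar A_1$. Even after one uses $\mbE\tilde F_4=0$ to kill the piece on $\bar A_1\cap A_3$ and reduces to the intersection $\bar A_1\cap\bar A_3$, the two coalescence events are correlated through the common flow, so $\mbP(\bar A_1\cap\bar A_3)$ is not the product $\mbP(\bar A_1)\mbP(\bar A_3)$, and the crude bound $\mbP(\bar A_1\cap\bar A_3)\le\sqrt{\mbP(\bar A_1)\mbP(\bar A_3)}$ gives the wrong exponent $1/(16t)$ after the Cauchy--Schwarz square root.

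The paper avoids this entirely by splitting first on the middle gap $A_2$. On $A_2$ the blocks $(F_1,F_2)$ and $(F_3,F_4)$ decouple, so the four-product \emph{factorises} as $\mbE[F_1F_2]\cdot\mbE[F_3F_4]$. A preliminary two-point estimate (proved first, by exactly the coupling you describe but for $k=2$) gives $\mbE[F_1F_2]\lesssim(\mbE F^4)^{1/2}e^{-d_1^2/(8t)}$ and likewise for the other pair; the product of exponentials in the first summand then drops out without ever having to control a joint coalescence probability. Only on the branch $\bar A_2$ does one peel off $A_1$ and $A_3$, and there the surviving piece lives on the triple intersection $\bar A_1\cap\bar A_2\cap\bar A_3$, where Cauchy--Schwarz with $p=q=2$ and the bound $\mbP(\cap_i\bar A_i)\le(\prod_i\mbP(\bar A_i))^{1/3}$ give the second summand with its $1/(24t)$ exponents and the $(\mbE F^8)^{1/2}$ cost.
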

\begin{proof}
At first, consider
\begin{multline*}
\mbE\prod_{i=1}^2 F\left(x([k_i,k_i+1],t)\right)=
\mbE\prod_{i=1}^2 F\left(x([k_i,k_i+1],t)\right)\1_{x(k_1+1,t)<x(k_2,t)}+\\+
\mbE\prod_{i=1}^2 F\left(x([k_i,k_i+1],t)\right)\1_{x(k_1+1,t)=x(k_2,t)}.
\end{multline*}
Let us denote by $\tilde x$ the independent copy of the Arratia flow. Then
\begin{multline*}
\mbE\prod_{i=1}^2 F\left(x([k_i,k_i+1],t)\right)\1_{x(k_1+1,t)<x(k_2,t)}+
\mbE\prod_{i=1}^2 F\left(x([k_i,k_i+1],t)\right)\1_{x(k_1+1,t)=x(k_2,t)}\leq\\
\leq
\mbE F\left(x([k_1,k_1+1],t)\right)F\left(\tilde x([k_2,k_2+1],t)\right)\1_{x(k_1+1,t)<\tilde x(k_2,t)}+\\+
\left[\mbE\prod_{i=1}^2 F^p\left(x([k_i,k_i+1],t)\right)\right]^{1/p} \left[\mbP \{x(k_1+1,t)=x(k_2,t)\}\right]^{1/q}\leq\\
\leq\mbE F\left(x([k_1,k_1+1],t)\right)\mbE F\left(\tilde x([k_2,k_2+1],t)\right)+\\+
2\left[\mbE\prod_{i=1}^2 F^p\left(x([k_i,k_i+1],t)\right)\right]^{1/p} \left[\mbP \left\{\mathcal{N}(0,1)>\frac{k_2-k_1-1}{\sqrt 2}\right\}\right]^{1/q}\leq\\
\leq \frac{2}{\sqrt{2\pi}}\left(\mbE F^{2p}(x([0;1],t))\right)^{1/p}\exp\left\{-\frac{(k_2-k_1-1)^2}{4qt}\right\}\left(\frac{\sqrt{2t}}{k_2-k_1-1}\right)^{1/q}
\end{multline*}

We use similar consideration to obtain upper bound for $\mbE\prod_{i=1}^4 F\left(x([k_i,k_i+1],t)\right).$ 
\begin{multline*}
\mbE\prod_{i=1}^4 F\left(x([k_i,k_i+1],t)\right)=\\
=
\mbE\prod_{i=1}^4 F\left(x([k_i,k_i+1],t)\right)\1_{x(k_2+1,t)<x(k_3,t)}
+\\+\mbE\prod_{i=1}^4 F\left(x([k_i,k_i+1],t)\right)\1_{x(k_2+1,t)=x(k_3,t)}=\\
=\mbE\prod_{i=1}^2 F\left(x([k_i,k_i+1],t)\right)\prod_{i=3}^4 F\left(\tilde x([k_i,k_i+1],t)\right)\1_{x(k_2+1,t)<\tilde x(k_3,t)}
+\\+\mbE\prod_{i=1}^4 F\left(x([k_i,k_i+1],t)\right)\1_{x(k_2+1,t)=x(k_3,t)},
\end{multline*}
where $\tilde x$ is an independent copy of the Arratia flow. Using independency and the previous upper bound with $p=q=2$ we can continue
\begin{multline*}
\mbE\prod_{i=1}^2 F\left(x([k_i,k_i+1],t)\right)\prod_{i=3}^4 F\left(\tilde x([k_i,k_i+1],t)\right)\1_{x(k_2+1,t)<\tilde x(k_3,t)}
+\\+\mbE\prod_{i=1}^4 F\left(x([k_i,k_i+1],t)\right)\1_{x(k_2+1,t)=x(k_3,t)}\leq
\end{multline*}
\begin{multline*}
 \leq
\mbE\prod_{i=1}^2 F\left(x([k_i,k_i+1],t)\right)\mbE\prod_{i=3}^4 F\left(\tilde x([k_i,k_i+1],t)\right)
+\\+\mbE\prod_{i=1}^4 F\left(x([k_i,k_i+1],t)\right)\1_{x(k_2+1,t)=x(k_3,t)}\1_{x(k_1+1,t)<x(k_2,t)}+\\
+\mbE\prod_{i=1}^4 F\left(x([k_i,k_i+1],t)\right)\1_{x(k_2+1,t)=x(k_3,t)}\1_{x(k_1+1,t)=x(k_2,t)}\leq
\\ \leq \left( \mbE F^{4}(x([0;1],t))\right)^{1/2}e^{-(k_2-k_1-1)^2/8t} e^{-(k_4-k_3-1)^2/8t}+\\+
\mbE F(\tilde x([k_1, k_1+1],t)\mbE \prod_{i=2}^4 F(x([k_i,k_i+1],t))+\\
+\mbE\prod_{i=1}^4 F\left(x([k_i,k_i+1],t)\right)\1_{x(k_2+1,t)=x(k_3,t)}\1_{x(k_1+1,t)=x(k_2,t)}\1_{x(k_3+1,t)=x(k_4,t)}
\leq
\end{multline*}
\begin{multline*}
\leq
\left( \mbE F^{4}(x([0;1],t))\right)^{1/2}e^{-(k_2-k_1-1)^2/8t} e^{-(k_4-k_3-1)^2/8t}+\\
+
\left[\mbE\prod_{i=1}^4 F^2\left(x([k_i,k_i+1],t)\right)\right]^{1/2}
\mbP\left\{x(k_{i}+1,t)=x(k_{i+1},t), \ i = 1,2,3 \right\}^{1/2}\leq\\
\leq \left( \mbE F^{4}(x([0;1],t))\right)^{1/2}e^{-(k_2-k_1-1)^2/8t} e^{-(k_4-k_3-1)^2/8t}+\\+
\left( \mbE F^{8}(x([0;1],t))\right)^{1/2}e^{-(k_2-k_1-1)^2/24t}e^{-(k_3-k_2-1)^2/24t} e^{-(k_4-k_3-1)^2/24t}.
 \end{multline*}
 Lemma is proved.
\end{proof}

\begin{theorem}
\label{continuity_inequality}
Let $f\in C^1(\mbR)$ be a  1-periodic function such that
$\text{supp}f|_{[0,1]} \subset [\ve, 1-\ve]$ and $\int_0^1 f(u)du = 0.$ Fix $0<t_0<T<\infty.$ Then for any $s,t$ such that $0<t_0\leq s<t\leq T<\infty$ and for any $n\geq 1$ there exists a constant $C$ which does not depend on $s,t$ and $n$ such that
 $$
\mbE|X^n_t(f)-X^n_s(f)|^4\leq C|t-s|^{2}.
 $$
\end{theorem}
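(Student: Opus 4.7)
The plan is to reduce the fourth moment of $X^n_t(f)-X^n_s(f)$ to that of a sum of stationary mean-zero blocks indexed by integers, control the single-block moments via Lemma \ref{conv_in_D_1}, and then estimate the four-point terms by a sharpened version of Lemma \ref{conv_in_D_n} that exploits the mean-zero property.

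Set $F_k:=A_{k,t}f-A_{k,s}f$, so that $X^n_t(f)-X^n_s(f)=n^{-1/2}\sum_{k=0}^{n-1}F_k$. Since $\rho^{(1)}_\tau\equiv 1/\sqrt{\pi\tau}$ is spatially constant and $\int_0^1 f=0$ with $f$ $1$-periodic, $\mbE F_k=0$ for every $k$, and by spatial stationarity of the Arratia flow each $F_k$ has the same distribution as $F_0$. Applying Lemma \ref{conv_in_D_1} on the translated unit interval (its proof transfers verbatim) with $k=4$ and $k=8$, and using $|t-s|\leq T-t_0$ to absorb the higher powers of $|t-s|$, yields $\mbE F_k^4\leq C|t-s|^2$ and $\mbE F_k^8\leq C|t-s|^2$, uniformly in $k$ and in $s,t\in[t_0,T]$.

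Expand $\mbE|X^n_t(f)-X^n_s(f)|^4=n^{-2}\sum_{k_1,k_2,k_3,k_4}\mbE F_{k_1}F_{k_2}F_{k_3}F_{k_4}$ and group by multiplicity pattern. Patterns with at most two distinct indices have $O(n^2)$ summands, each bounded by $\mbE F^4\leq C|t-s|^2$ via H\"older, giving $O(|t-s|^2)$ after division by $n^2$. Patterns with three distinct indices (of the form $F_j^2F_kF_\ell$) are handled by applying the two-index coupling step from the proof of Lemma \ref{conv_in_D_n} to the pair $(F_k,F_\ell)$, which produces exponential decay in $|k-\ell|$ that absorbs one of the three free sums and again yields $O(|t-s|^2)$. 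The main obstacle is the fully distinct case $k_1<k_2<k_3<k_4$: as stated, Lemma \ref{conv_in_D_n} produces $(\mbE F^4)^{1/2}\leq C|t-s|$ in its first summand, whose exponentials cover only the outer gaps $k_2-k_1$ and $k_4-k_3$; summation then leaves a free $O(n)$ sum over the middle gap and yields only $O(|t-s|)$.

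The required refinement is to revisit the proof of Lemma \ref{conv_in_D_n}: the decoupling at the middle gap $k_2|k_3$ via an independent Arratia copy $\tilde x$ produces, before any Cauchy--Schwarz estimation, the factorized term $\mbE F_{k_1}F_{k_2}\cdot\mbE F_{k_3}F_{k_4}$. Applying the same coupling to each pair individually and using $\mbE F=0$ (which makes the leftover $\mbE F\cdot\mbE F$ vanish) gives $|\mbE F_{k_1}F_{k_2}|\leq C(\mbE F^4)^{1/2}e^{-(k_2-k_1-1)^2/8t}$ and analogously for the second pair, so
$$|\mbE F_{k_1}F_{k_2}F_{k_3}F_{k_4}|\leq C(\mbE F^4)e^{-(k_2-k_1-1)^2/8t}e^{-(k_4-k_3-1)^2/8t}+C(\mbE F^8)^{1/2}e^{-[(k_2-k_1-1)^2+(k_3-k_2-1)^2+(k_4-k_3-1)^2]/24t}.$$
Summing the first term over $0\leq k_1<k_2<k_3<k_4\leq n-1$: the Gaussian factors make the sums over $k_2-k_1$ and $k_4-k_3$ bounded, while $k_1$ and $k_3$ each contribute $O(n)$, giving total $O(n^2\mbE F^4)=O(n^2|t-s|^2)$; the second term has Gaussian decay in all three gaps and sums to $O(n(\mbE F^8)^{1/2})=O(n|t-s|)$. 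Dividing by $n^2$ and combining with the diagonal contributions gives the claimed bound $\mbE|X^n_t(f)-X^n_s(f)|^4\leq C|t-s|^2$.
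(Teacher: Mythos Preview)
Your approach is the paper's: expand the fourth moment as $n^{-2}\sum_{k_1,\ldots,k_4}\mbE F_{k_1}\cdots F_{k_4}$, control single-block moments via Lemma~\ref{conv_in_D_1}, and handle the cross terms through the coupling estimate of Lemma~\ref{conv_in_D_n}. The paper packages the block functionals through the dual flow $\psi$, but this is purely notational. Your ``refinement'' of Lemma~\ref{conv_in_D_n}---replacing the factor $(\mbE F^4)^{1/2}$ by $\mbE F^4$ in the leading term---is exactly what that lemma's own proof produces: the decoupling at the middle gap yields the product $\mbE F_{k_1}F_{k_2}\cdot\mbE F_{k_3}F_{k_4}$, and each factor is already bounded there by $(\mbE F^4)^{1/2}e^{-\cdots}$, so the product carries $\mbE F^4$. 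As you correctly diagnose, without this correction the paper's displayed chain of inequalities only reaches $C|t-s|$, not $C|t-s|^2$.

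There is one remaining slip in your version. For the second summand of your four-distinct estimate (and implicitly for the three-distinct pattern) you bound by $(\mbE F^8)^{1/2}$ and then invoke $\mbE F^8\leq C|t-s|^2$ from Lemma~\ref{conv_in_D_1} as stated. After summation this term contributes $C|t-s|/n$, which is \emph{not} dominated by $C|t-s|^2$ uniformly in $n$ (take $n=4$ and $|t-s|$ small). The easy fix is that the proof of Lemma~\ref{conv_in_D_1} actually gives $\mbE F^k\leq C|t-s|^{k/2}$: the $C^1$-increment part yields $(t-s)^{k/2}$, and the boundary contribution $e^{-\ve^2/2(t-s)}(t-s)$ is $O((t-s)^m)$ for every $m$; the exponent $2$ in the lemma's statement is just a crude floor. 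With $\mbE F^8\leq C|t-s|^4$, hence $(\mbE F^8)^{1/2}\leq C|t-s|^2$, your summation closes and the theorem follows.
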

\begin{proof}

 Using representation $X^n_t(f) = \frac 1 {\sqrt n} \sum_{k=0}^{n-1}\int_k^{k+1}f(u)N_t(du)$ we have for any $n \geq 1$ and $s<t$
\begin{multline*}
\mbE(X^n_t(f) - X^n_s(f))^4 = \frac 1 {n^2} \sum_{k_1, k_2,k_3, k_4 = 0}^{n-1} \mbE\prod_{i=1}^4 \left(\int_{k_i}^{k_i+1} f(u)N_t(du)-  \int_{k_i}^{k_i+1} f(u)N_s(du)\right).
\end{multline*}
Using notion of the  dual flow $\psi$ to the Arratia flow,
the integral $\int_{k}^{k+1} f(u)N_t(du)$ can be considered as functional from $\psi_{t,0}([k,k+1]).$ Let us denote by 
$$
F\left(\psi_{t,0}([k,k+1])\right)-F\left(\psi_{s,0}([k,k+1])\right) =\int_{k}^{k+1} f(u)N_t(du)-  \int_{k}^{k+1} f(u)N_s(du).
$$

Since
\begin{multline*}
    \mbE\prod_{i=1}^4 \left(F\left(\psi_{t,0}([k_i,k_i+1])\right)-F\left(\psi_{s,0}([k,k+1])\right)\right)=\\
=\mbE\prod_{i=1}^4\left( F\left(x([k_i,k_i+1],t)\right)-F\left(x([k_i,k_i+1],s)\right)\right)
\end{multline*}

 for arbitrary $k_1<k_2<k_3<k_4$  lemma \ref{conv_in_D_n}  gives the upper bound
\begin{multline*}
\mbE\prod_{i=1}^4\left( F\left(\psi_{t,0}([k_i,k_i+1])\right)-F\left(\psi_{s,0}([k_i,k_i+1])\right)\right)\leq
Ce^{-(k_2-k_1-1)^2/8t} e^{-(k_4-k_3-1)^2/8t}+\\+
Ce^{-(k_2-k_1-1)^2/24t}e^{-(k_3-k_2-1)^2/24t} e^{-(k_4-k_3-1)^2/24t}.
\end{multline*}

Now, using  this upper bound and equality
\begin{multline*}
\sum_{1\leq k_1<k_2<k_3<k_4\leq n} f(k_2-k_1, k_3-k_2, k_4-k_3) = \\=
\sum_{k_1=1}^{n-3}\sum_{h_1=1}^{n-k_1-2}\sum_{h_2=1}^{n-k_1-h_1}\sum_{h_3=1}^{n-k_1-h_1-h_2}f(h_1, h_2, h_3),
\end{multline*}
we get
\begin{multline*}
\frac 1{n^2} \sum_{1\leq k_1<k_2<k_3<k_4\leq n}\mbE\prod_{i=1}^4 \left(F\left(\psi_{t,0}([k_i,k_i+1])\right)-F\left(\psi_{s,0}([k_i,k_i+1])\right)\right)\leq\\
\leq
\left( \mbE F^{4}(\psi_{t,0}([0;1]))-F\left(\psi_{s,0}([k_i,k_i+1])\right)\right)^{1/2}\cdot\\
\cdot\frac 1{n^2}\sum_{k_1=1}^{n-3}\sum_{h_1=1}^{n-k_1-2}
\sum_{h_2=1}^{n-k_1-h_1}\sum_{h_3=1}^{n-k_1-h_1-h_2}
\left(\exp\left\{-(h_1 -1)^2/24t - (h_3-1)^2/24t\right\}\right)\leq \\
\leq C \left( \mbE F^{4}(\psi_{t,0}([0;1]))-F\left(\psi_{s,0}([k_i,k_i+1])\right)\right)^{1/2}.
\end{multline*}

From the Lemma \ref{conv_in_D_1}  we get the upper bound
\begin{multline*}
   \left( \mbE \left|F(\psi_{t,0}([0;1]))-F(\psi_{s,0}([0;1]))\right|^4\right)^{1/2} =\\
   =\left( \mbE\left[ \int_{k}^{k+1} f(u)N_t(du)-  \int_{k}^{k+1} f(u)N_s(du)\right]^4\right)^{1/2} \leq\\
   \leq C[(t-s)^2+(t-s)^4]^{1/2}
\end{multline*}

\end{proof}

\begin{corollary}
Let $f\in C^1(\mbR)$ be a  1-periodic function such that
$\text{supp}f|_{[0,1]} \subset [\ve, 1-\ve]$ and $\int_0^1 f(u)du = 0.$ Then for $0<t_0<T<\infty$ and $n\geq 1$ the random processes 
$\left\{X_t^n(f) = \frac 1 {\sqrt n}\int_0^n f(u)N_t(du), \ t\in [t_0, T] \right\},$ $(n\geq 1)$ have continuous modification
and
$$
\left\{\frac {1}{\sqrt n}\int_0^n f(u) N_t(du), \ t\in [t_0, T]\right\}  \Rightarrow\zeta_f(\cdot)  \ n\to \infty \ \text{in } C([t_0, T]).
$$
\end{corollary}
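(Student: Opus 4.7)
The plan is to combine the finite-dimensional convergence already established in Theorem \ref{CLT}(2) with a tightness argument in $C([t_0,T])$ derived from the moment bound in Theorem \ref{continuity_inequality}. Both ingredients are essentially in hand, so the corollary should follow from the standard Kolmogorov--Chentsov machinery.

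First I would observe that the fourth-moment inequality $\mbE|X^n_t(f)-X^n_s(f)|^4 \leq C|t-s|^2$ from Theorem \ref{continuity_inequality} has exponent $\beta = 2$, which exceeds $1$. By the Kolmogorov continuity theorem, for each $n\ges 1$ the process $\{X^n_t(f): t\in[t_0,T]\}$ admits a continuous modification (indeed one that is H\"older continuous of any order less than $1/4$). From now on I would work with these continuous versions.

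Second, I would invoke the Kolmogorov--Chentsov tightness criterion in $C([t_0,T])$. The same uniform bound $\mbE|X^n_t(f)-X^n_s(f)|^4 \leq C|t-s|^2$ with constant $C$ independent of $n$, together with the fact that the one-dimensional marginals $X^n_{t_0}(f)$ are tight in $\mbR$ (which already follows from Theorem \ref{CLT}(1) applied at $t=t_0$, since they converge in distribution to $\zeta_f(t_0)$ and hence form a tight family), yields tightness of the sequence $\{X^n(f)\}_{n\ges 1}$ as $C([t_0,T])$-valued random elements.

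Finally, by Theorem \ref{CLT}(2), for every finite collection $t_0\les t_1<\ldots<t_m\les T$ the finite-dimensional distributions $(X^n_{t_1}(f),\ldots,X^n_{t_m}(f))$ converge weakly to the Gaussian vector $(\zeta_f(t_1),\ldots,\zeta_f(t_m))$ with the prescribed covariance. Combining tightness with convergence of finite-dimensional distributions gives the weak convergence $X^n(f)\Rightarrow \zeta_f(\cdot)$ in $C([t_0,T])$, which is precisely the claim. The main technical obstacle has already been overcome, namely establishing the fourth-moment bound with exponent strictly greater than $1$ in Theorem \ref{continuity_inequality}; without this one would need a different path to tightness, and the presence of the condition $\mathrm{supp}\, f|_{[0,1]}\subset[\ve,1-\ve]$ together with $\int_0^1 f(u)du=0$ is precisely what makes that estimate available.
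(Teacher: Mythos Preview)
Your proposal is correct and follows essentially the same route as the paper: the continuous modification comes from the fourth-moment bound in Theorem \ref{continuity_inequality} via Kolmogorov's criterion, and weak convergence in $C([t_0,T])$ is obtained by combining the finite-dimensional convergence of Theorem \ref{CLT}(2) with tightness from that same uniform moment estimate. The paper's proof is terser (it just cites Kallenberg, Corollary~16.9, for the tightness step), but your added detail about tightness of the marginals $X^n_{t_0}(f)$ is a sensible completeness check.
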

\begin{proof}
Existence of continuous modifications immediately follows from previous theorem and Kolmogorov theorem.

For the proof of weak convergence we note that Theorem \ref{CLT} stated the finite-dimensional convergence. The tightness  of the sequence $\{X_t^n(f), \ t\in[t_0; T]\}$ follows from theorem \ref{continuity_inequality} (see, for example, Corollary 16.9, \cite{Kallenberg})
\end{proof}

\begin{corollary}
For every $f\in C^1([0;1])$ be a   function such that
$\text{supp} f \subset [\ve, 1-\ve]$ the limiting Gaussian process $\{\zeta_f (t), t\in[t_0,T]\}$ from the theorem \ref{CLT} is continuous.
\end{corollary}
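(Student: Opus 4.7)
The plan is to deduce continuity of $\zeta_f$ from the preceding corollary together with the moment bound of Theorem \ref{continuity_inequality}. First, I would argue that the moment inequality
$$
\mbE|X^n_t(f)-X^n_s(f)|^4\les C|t-s|^2,\qquad t_0\les s<t\les T,
$$
persists in the limit. More precisely, the second statement of Theorem \ref{CLT} gives the joint convergence $(X^n_s(f),X^n_t(f))\Rightarrow(\zeta_f(s),\zeta_f(t))$, and the sequence $|X^n_t(f)-X^n_s(f)|^4$ is uniformly integrable because one can produce a bound on a higher moment, say the sixth, via the same scheme (Lemmas \ref{conv_in_D_1} and \ref{conv_in_D_n} are stated for general powers $k$). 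Consequently, passing to the limit in the inequality yields
$$
\mbE|\zeta_f(t)-\zeta_f(s)|^4\les C|t-s|^2.
$$

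Once this estimate is available for the limit process, Kolmogorov's continuity criterion (applicable since $2>1$) produces a continuous modification of $\{\zeta_f(t),\ t\in[t_0,T]\}$, and Gaussianity ensures that all modifications agree at each fixed $t$. Alternatively, and more cleanly, one can invoke the previous corollary directly: since $X^n_\cdot(f)\Rightarrow\zeta_f(\cdot)$ in $C([t_0,T])$, the limit is by construction a random element of $C([t_0,T])$, so its sample paths are continuous. The only step that requires any real attention is the uniform integrability justifying the passage to the limit in the fourth-moment inequality; this is routine given the higher-moment bounds already established in the preceding lemmas, so I do not expect any genuine obstacle.
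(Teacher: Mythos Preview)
Your proposal is correct and follows essentially the same strategy as the paper: pass the fourth-moment bound from Theorem \ref{continuity_inequality} to the limit process and then invoke Kolmogorov's criterion. The only technical difference is in how the passage to the limit is justified. The paper simply applies Fatou's lemma (implicitly coupling via Skorokhod representation so that $|X^n_t(f)-X^n_s(f)|^4\to|\zeta_f(t)-\zeta_f(s)|^4$ a.s.) to obtain
\[
\mbE|\zeta_f(t)-\zeta_f(s)|^4\les\liminf_{n\to\infty}\mbE|X^n_t(f)-X^n_s(f)|^4\les C|t-s|^2,
\]
which avoids any appeal to uniform integrability or higher moments. Your route through uniform integrability works too, but is a little heavier than necessary here, since only an upper bound on the limiting moment is needed. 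Your alternative argument---that the previous corollary already gives convergence in $C([t_0,T])$, so the limit is automatically a continuous process---is in fact the shortest and cleanest of the three, and the paper does not take it.
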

\begin{proof}

In the proof of the previous theorem we obtained the inequality
 $$
\mbE|X^n_t(f)-X^n_s(f)|^4\leq C|t-s|^{2}.
 $$
By Theorem \ref{CLT} weak convergence of finite-dimensional distributions of the sequence $X_n$ holds. By Fatou's lemma
$$
\mbE\liminf_{n\to \infty}|X^n_t(f)-X^n_s(f)|^4\leq \liminf_{n\to \infty} \mbE|X^n_t(f)-X^n_s(f)|^4\leq C|t_2-t_1|^{2}.
$$
From this follows that for the limiting process $\zeta_f(\cdot)$ the same inequality holds
 $$
\mbE|\zeta_f(t)-\zeta_f(s)|^4\leq C|t-s|^{2}.
 $$
 
\end{proof}

\section {Limiting Gaussian process as a functional.}

The statement 3 from the Theorem \ref{CLT} give us possibility to define finite-dimensional distributions of the process $\zeta$ as a process indexed by the functions $\vph\in L_3([0;1])$. Namely it was proved that for functions $\vph_{1},...,\vph_{m}\in L_3([0;1]) $ the weak convergence of the random vector $ \left(X_{t}^n(\vph_1),...,X_{t}^n(\vph_m)\right) $ to the random vector
 $\left(\zeta_{\vph_1}(t),\ldots,\zeta_{\vph_m}(t) \right) $ holds. By Kolmogorov theorem one can define the Gaussian random field $\zeta$ on the space of parameters $L_3([0;1])$. Note that the covariance of $\zeta$ is
\begin{multline*}
\cov{\zeta_f(t)}{{\zeta_g(t)}} =\\=\frac 1 2 \int_0^1 \int_0^1  f(u)g(v)(G_t(u,v)+G_t(v,u)) dudv +\\+\frac {1}{\sqrt{2\pi}}\int_0^1f(u)g(u)du=
(f, (1+\tilde G_t)g),
\end{multline*}
where we denoted by $\tilde G_t$ the integral operator in $L_2$ with kernel $\tilde G_t = \frac 1 2 (G_t(u,v)+G_t(v,u)).$
\begin{lemma}
\label{continuity_kernel}
The function  $\tilde G_t$ is continuous.
\end{lemma}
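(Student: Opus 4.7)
The plan is to reduce the statement to continuity of $G_t$ on $[0,1]^2$, since $\tilde G_t = \tfrac12(G_t(u,v)+G_t(v,u))$ inherits continuity from $G_t$. By definition, $G_t(u,v) = g_t(u-v) + 2\sum_{l=1}^{\infty} g_t(u-v+l)$, so it suffices to show that (i) $g_t$ is a continuous function on $\mathbb{R}$, and (ii) the series converges uniformly on $[0,1]^2$.

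For (i), I would read off from the explicit formula \eqref{gt} that $g_t$ is smooth on $\mathbb{R}\setminus\{0\}$, while at the origin the first term vanishes like $|x|$ and the second term contributes $-\tfrac{1}{\pi t}$, so $g_t$ extends continuously at $0$ with value $g_t(0)=-\tfrac{1}{\pi t}$. In particular $g_t$ is continuous on all of $\mathbb{R}$.

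For (ii), I would reuse the decay estimate already invoked in the proof of Theorem~\ref{CLT}: using Mills' ratio $\int_{|x|/\sqrt t}^{\infty}e^{-v^2/4}\,dv \les \tfrac{2\sqrt{t}}{|x|}e^{-x^2/4t}$, the formula \eqref{gt} yields $|g_t(x)| \les C_t e^{-x^2/4t}$ for $|x|\ges 1$. Hence for $(u,v)\in[0,1]^2$ and $l\ges 2$, $|g_t(u-v+l)| \les C_t e^{-(l-1)^2/4t}$, which is a convergent numerical series independent of $(u,v)$. This gives uniform convergence of $\sum_l g_t(u-v+l)$ on $[0,1]^2$ (the even stronger uniform convergence of $\sum_l l\, g_t(u-v+l)$ was already noted in the proof of Theorem~\ref{CLT}).

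Once uniform convergence of a series of continuous functions is established, the standard fact that such a limit is continuous finishes the argument, and symmetrization preserves continuity. I do not expect any serious obstacle here: the only mildly delicate point is the continuity of $g_t$ at $0$, which is immediate from \eqref{gt}, and all the analytic content needed for uniform convergence is already contained in the estimates used earlier for $\sigma_t^2(f)$.
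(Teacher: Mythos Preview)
Your proposal is correct and follows essentially the same route as the paper's own proof: both reduce the question to the uniform convergence of the series $\sum_{l\ges 1} g_t(u-v+l)$ via the Weierstrass $M$-test, using the explicit expression \eqref{gt} for $g_t$, and then conclude that a uniform limit of continuous functions is continuous. The only minor differences are cosmetic: you invoke Mills' ratio to package the tail bound as $|g_t(x)|\les C_t e^{-x^2/4t}$, whereas the paper writes out the majorant directly from \eqref{gt}; and you spell out the continuity of $g_t$ at $0$, which the paper leaves implicit.
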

\begin{proof}
From the theorem \ref{CLT}, the function $G_t$ is equal to the series
$$
G_t(v_1,v_2)=g_t(v_1-v_2) +  2\sum_{l= 1}^{\infty} g_t(v_1-v_2 +l),\text{ where } g_t(v_1-v_2) = \rho_t^{(2)}(v_1, v_2)- \frac {1}{{\pi t}}.
$$
Using formula (\ref{gt}) for $g_t$  by Weierstrass criterion for uniform convergence with upper bound\\
$
\dfrac{K+l}{2 \sqrt t} \cdot e^{-(l-K)^2/4t} \cdot \int\limits_{\abs{K+l}/ \sqrt t}^{+\infty}e^{-v^2/4}\,dv+ e^{-(L-K)^2/2t},
$
 the series
$
\sum_{l= 1}^{\infty} g_t(x +l)
$
converges uniformly on the interval $[-K, K]$.
\end{proof}
Since for every fixed $t\geq0$ $\tilde G_t$ is continuous, $\zeta$ can be uniquely expanded to Gaussian random field defined on set of the functions $L_2([0;1])$.

 We will consider $\zeta$ as a generalized random element in Hilbert space $L_2([0;1])$ (\cite{Skor,Dor} ) with the covariance operator $1+\tilde G_t.$
One of the most known examples of generalized Gaussian random element on $L_2$ is a formal derivative $dw$ of the 1-dimensional Wiener process $w(\cdot)$  which often called white noise. Note that for this formal derivative one can consider stochastic integral and multiple stochastic integral for function from $L_2([0;1]^k)$. The definition of the integrals
$$
\int_0^1\int_0^{t_1}\ldots \int_0^{t_{k-1}} f(t_1,t_2,\ldots, t_k)dw(t_1)\ldots dw(t_k)
$$
is obvious.
In the context of our paper one can expect that the multiple integrals with respect to point measure $N_t$ after suitable normalization will converge to some integrals defined by $\zeta.$ First of all let us describe the construction of  multiple integrals with respect to $\zeta.$ In the articles by A. V. Skorokhod \cite{Skor} and by A.A. Dorogovtsev \cite{Dor} it was proposed to describe such integrals in term of the Hilbert-Schmidt forms from generalized Gaussian elements. Let us describe the suitable construction.

Now we will discuss the possibility to define the Hilbert-Schmidt form of the process $\zeta$. 
Let $H$ be a real separable  Hilbert space. Consider generalized Gaussian random element   $\xi$ in $H$ with zero mean and covariance operator $A$. We denote by $H^{\otimes k}_{symm}$ the space of symmetric $k-$linear Hilbert-Schmidt forms of $H$. The Wick product (see, for example, \cite{Dor}) of  Gaussian random variables we denote by $*.$
\begin{lemma}
\label{H-S_form}
Assume that covariance operator $A$ of a generalized Gaussian element $\xi$ is invertible.
Let $A_k \in H^{\otimes k}_{symm}$. Then for every orthonormal basis $\{e_j\}_{j\geq 1}$ in $H$ the series
$$
A_k(\xi, \ldots, \xi):=\sum_{n_1,\ldots, n_k = 1}^{\infty} A(e_{n_1}, \ldots, e_{n_k}) \xi(e_{n_1})*\ldots*\xi(e_{n_k}))
$$
converges in $L_2$ and its value $A_k(\xi, \ldots, \xi)$ does not depend on choice of a basis in $H.$
\end{lemma}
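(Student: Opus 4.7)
The plan is to work in three steps: first record the $L_2$ inner product of two Wick monomials via the Wick/Isserlis identity; second, use that identity to show the partial sums form a Cauchy sequence in $L_2$; third, characterise the limit by its pairings with Wick monomials and thereby read off basis independence.

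Put $A_{ij}:=(Ae_i,e_j)=\mbE[\xi(e_i)\xi(e_j)]$. The standard Wick/Isserlis identity gives, for any $h_1,\ldots,h_k,g_1,\ldots,g_k\in H$,
\[
\mbE\bigl[\xi(h_1)*\cdots*\xi(h_k)\cdot\xi(g_1)*\cdots*\xi(g_k)\bigr]=\sum_{\sigma\in S_k}\prod_{i=1}^{k}(Ah_i,g_{\sigma(i)}),
\]
while Wick monomials of different orders are orthogonal. Writing $S_N$ for the partial sum restricted to $n_1,\ldots,n_k\leq N$ and using the symmetry of $A_k$ to collapse the $\sigma$-sum, this yields
\[
\mbE|S_N-S_M|^2=k!\,\bigl\langle A^{\otimes k}(A_k^N-A_k^M),\,A_k^N-A_k^M\bigr\rangle_{H^{\otimes k}},
\]
where $A_k^N$ denotes the truncation of the kernel to indices $\leq N$. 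Since $A$ is a bounded covariance operator, $A^{\otimes k}$ is bounded on $H^{\otimes k}$, so the right-hand side is dominated by $k!\,\|A\|^{k}_{\mathrm{op}}\|A_k^N-A_k^M\|_{H^{\otimes k}}^{2}$, which vanishes as $N,M\to\infty$ because $A_k\in H^{\otimes k}_{symm}$. Hence $\{S_N\}$ is Cauchy in $L_2$; write the limit as $X^{(e)}$.

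For basis independence I would pass to the limit in the partial sums and reapply the Wick identity to obtain, after a second use of the symmetry of $A_k$,
\[
\mbE\bigl[X^{(e)}\cdot\xi(h_1)*\cdots*\xi(h_k)\bigr]=k!\,A_k(Ah_1,\ldots,Ah_k),\qquad h_1,\ldots,h_k\in H.
\]
The right-hand side is defined without any reference to a basis. Since the Wick monomials $\{\xi(h_1)*\cdots*\xi(h_k):h_i\in H\}$ span the $k$-th chaos of $\xi$ and $X^{(e)}$ lies in that chaos by construction, the displayed pairings determine $X^{(e)}$ uniquely; hence the analogous sum built from any other orthonormal basis produces the same random variable.

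The main obstacle is the algebra in step one: absorbing the permutation $\sigma$ in the permanent $\sum_\sigma\prod_i A_{n_i,m_{\sigma(i)}}$ using the symmetry of $A_k$ is what produces the clean factor $k!$ together with the tensor operator $A^{\otimes k}$, and it is essential that this bookkeeping is correct before the Hilbert--Schmidt bound can be applied. The invertibility of $A$ enters only at the last step: it guarantees that $A$ has dense range (so the basis-free quantities $A_k(Ah_1,\ldots,Ah_k)$ actually determine $A_k$) and that the Gaussian family $\{\xi(h)\}_{h\in H}$ is non-degenerate, so the Wick monomials really do span the whole $k$-th chaos.
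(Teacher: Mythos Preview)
Your argument is correct, but it differs from the paper's route. The paper does not work directly with the Wick permanent; instead it introduces the ``whitened'' element $\xi_0:=A^{-1/2}\xi$ (so $\xi_0$ has identity covariance), observes that for a finite-dimensional $A_k$ one has $A_k(\xi,\ldots,\xi)=B_k(\xi_0,\ldots,\xi_0)$ with $B_k:=A_k(A^{1/2}\cdot,\ldots,A^{1/2}\cdot)$, and then quotes the standard isometry $\mbE[B_k(\xi_0,\ldots,\xi_0)]^2=k!\|B_k\|_k^2$ together with the bound $\|B_k\|_k\le\|A^{1/2}\|^{k}\|A_k\|_k$. Basis independence is then immediate, because $B_k$ is a basis-free Hilbert--Schmidt kernel and its multiple integral against the standard $\xi_0$ is classically basis-independent. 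In this version invertibility of $A$ is used at the outset, to make sense of $A^{-1/2}$.

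Your approach keeps the original covariance and computes the $L_2$ norm via the Isserlis permanent, arriving at $k!\langle A^{\otimes k}A_k,A_k\rangle$; you then bound $A^{\otimes k}$ by $\|A\|_{\mathrm{op}}^k$ and read off basis independence from the pairings $\mbE[X^{(e)}\,\xi(h_1)*\cdots*\xi(h_k)]=k!\,A_k(Ah_1,\ldots,Ah_k)$. This is a genuinely different (and self-contained) path: it avoids the reduction to white noise and isolates exactly where invertibility is needed --- only to ensure that the pairings with Wick monomials separate points in the $k$-th chaos. The paper's reduction is shorter and leans on the classical multiple Wiener--It\^o theory; yours is more explicit and would still give the $L_2$ convergence (though not necessarily uniqueness of the limit) even when $A$ is merely bounded and not invertible.
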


\begin{proof}
Let $A_k$ be a finite-dimensional form, that is there exists a basis $\{e_j\}_{j\geq1}$ in $H$ such that
$
A_k = \sum_{n_1,\ldots, n_k = 1}^{N}a_{n_1, \ldots, n_k} e_{n_1}\otimes\ldots \otimes e_{n_k}.
$
Let us define $\xi_0=A^{-1/2}\xi$ by the rule
$$
\xi_0(e) = \xi(A^{-1/2}e), \ e \in H.
$$
Then for finite-dimensional symmetric form $A_k$
$$
A_k(\xi, \ldots, \xi)= A_k (A^{1/2}\xi_0,\ldots A^{1/2} \xi_0)
$$
and
$$
\mbE A_k(\xi, \ldots, \xi)^2 = k!\|A_k(A^{1/2}\cdot,\ldots A^{1/2}\cdot )\|_k^2,
$$
where $\| \cdot\|_k$ is the Hilbert-Schmidt norm.
Note that
$
\|A_k(A^{1/2}\cdot,\ldots A^{1/2}\cdot )\|_k\leq \|A^{1/2}\|^k \|A_k\|_k.
$
From this upper bound for the norm and the fact that the definition of $A_k(A^{1/2}\cdot,\ldots A^{1/2}\cdot )$ does not depend on choice of basis the statement of the lemma follows.
\end{proof}

From the lemma \ref{continuity_kernel} it follows that the integral  operator  with kernel $\tilde G$ is a Hilbert-Schmidt operator and so it has discrete spectrum with eigenvalues of  finite multiplicity. From this follows  $\dim Ker(1+\tilde G)< +\infty.$
Using the lemma \ref{H-S_form} we can define the action of $k-$linear Hilbert-Schmidt form on limiting Gaussian element $\zeta$ since its covariance operator $1+\tilde G$ is invertible on $Ker^{\bot}(1+\tilde G)$.

Next theorem gives us the limit of multiple integrals. Before we formulate it we need to define a $k-$linear form associated to a function $f.$
Let  $f: \mbR^k \to \mbR$ be a symmetric function from $L_2([0;1]^k).$ Then for an orthogonal basis
$\{e_j\}_{j\geq 1}$ in $L_2([0;1])$ it can be written as the series
$$
f= \sum_{i_1,\ldots, i_k} a_{i_1,\ldots, i_k} e_{i_1}\otimes\ldots \otimes e_{i_k}.
$$
We denote by $A_f$ the $k-$linear Hilbert-Schmidt form  on $L_2([0;1])$ which is represented with the same series
$$
A_f= \sum_{i_1,\ldots, i_k} a_{i_1,\ldots, i_k} e_{i_1}\otimes \ldots \otimes e_{i_k},
$$
that is
$A_f(h_1,\ldots, h_k) =\sum_{i_1,\ldots, i_k} a_{i_1,\ldots, i_k} (e_{i_1},h_1) \ldots (e_{i_k},h_k). $
As was mentioned above the action of the form $A_f$ on the generalized Gaussian element $\zeta$  is well defined. Also we denote by
$$
A_f(\zeta, \ldots, \zeta, \delta_{x_j},\ldots, \delta_{x_k}) = \sum_{i_1,\ldots, i_k} a_{i_1,\ldots, i_k} \zeta(e_{i_1})*\ldots *\zeta(e_{i_{j-1}}) e_{i_{j}}(x_j)\ldots e_{i_k}(x_k).
$$
In the term of multi-linear forms from Gaussian element $\zeta$ we can describe the limit distribution of the multiple integral with respect to the point measure of the Arratia flow $N_t.$
\begin{theorem}
Let $f:\mbR^k \to \mbR$ be a symmetric periodic with period 1  function such that
$f|_{[0;1]^k} \in L_2([0;1]^k)$ and $\int_0^1 f(\vec x) dx_j = 0,$ $ j=1,\ldots, k$. Then for $k\in 2\mbZ$
\begin{multline*}
    \frac 1 {n^{k/2}} \int_0^n {\substack{k \\ \ldots}} \int_0^n f(\vec x)N_t^{\otimes k}(dx_1\ldots dx_k) \Rightarrow A_f(\zeta, \ldots, \zeta) + \\+ \int_0^1\int_0^1 C_k^2 A_f(\zeta,\ldots, \zeta, \delta_{x_{k-1}}, \delta_{x_k}) G_t(x_{k-1}, x_k) dx_k dx_{k-1}\ldots+\\+
    \ldots \int_0^1{\substack{ \\ \ldots}}\int_0^1  \frac {k!}{(k/2)! 2^{k/2}}A_f(\delta_{x_1},\ldots \delta_{x_k}) G_t(x_1,x_2)\ldots G_t(x_{k-1}, x_k) d\vec x,\ n \to \infty
\end{multline*}
and for $k\in \mbZ \setminus 2\mbZ$
\begin{multline*}
    \frac 1 {n^{k/2}} \int_0^n {\substack{k \\ \ldots}} \int_0^n f(\vec x)N_t^{\otimes k}(dx_1\ldots dx_k) \Rightarrow A_f(\zeta, \ldots, \zeta) + \\+ \int_0^1\int_0^1 C_k^2 A_f(\zeta,\ldots, \zeta, \delta_{x_{k-1}}, \delta_{x_k}) G_t(x_{k-1}, x_k) dx_k dx_{k-1}\ldots+\\+
    \ldots \int_0^1{\substack{ \\ \ldots}}\int_0^1  \frac {k!}{((k-1)/2)! 2^{(k-1)/2}}A_f(\zeta, \delta_{x_2},\ldots \delta_{x_k}) G_t(x_2,x_3)\ldots G_t(x_{k-1}, x_k) d\vec x,\ n \to \infty
\end{multline*}
\end{theorem}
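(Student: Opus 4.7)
The plan is to reduce to tensor-product $f$ by $L_2$-density, apply Theorem~\ref{CLT}(3), and then unfold the limit using the Wick/Isserlis identity for jointly Gaussian families.

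First I would approximate a symmetric $f\in L_2([0;1]^k)$ satisfying the zero-marginal conditions $\int_0^1 f\,dx_j=0$ by finite linear combinations of symmetric tensor products $\mathrm{Sym}(g_1\otimes\cdots\otimes g_k)$ with each $g_i\in L_2([0;1])$ of mean zero; such combinations are dense via the Fourier basis on the torus restricted to non-zero modes in every variable. Both sides of the claimed convergence depend continuously on $f\in L_2([0;1]^k)$: the left-hand side, uniformly in $n$, by the density bound $\rho_t^{(j)}\le(\pi t)^{-j/2}$ combined with Lemma~\ref{exp_of_sum} applied to $|f|^2$ (the zero-marginal assumption kills all partition summands that would otherwise grow faster than $n^{k/2}$); the right-hand side because $f\mapsto A_f$ takes $L_2([0;1]^k)$ isometrically into the Hilbert--Schmidt $k$-forms on $L_2([0;1])$ (Lemma~\ref{H-S_form}), on which $\zeta$ has a well-defined Wick-polynomial action. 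Hence it suffices to prove the statement for tensor-product $f$.

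For $f=g_1\otimes\cdots\otimes g_k$ with $\int_0^1 g_i=0$, periodicity gives $\int_0^n g_i\,\rho_t^{(1)}(u)\,du=0$, so Fubini yields
\[
\frac{1}{n^{k/2}}\int_{[0,n]^k} f(\vec x)\,N_t^{\otimes k}(d\vec x)=\prod_{i=1}^k X_t^n(g_i).
\]
Theorem~\ref{CLT}(3), extended to joint convergence of $(X_t^n(g_1),\ldots,X_t^n(g_k))$ by the same mixing argument as in its proof, together with the continuous mapping theorem, gives weak convergence of this product to $\prod_i \zeta_{g_i}(t)$. To identify the limit with the right-hand side, invoke the Wick/Isserlis identity: writing $C_{ab}:=\mathrm{cov}(\zeta_{g_a}(t),\zeta_{g_b}(t))$ and $*$ for the Wick product,
\[
\prod_{i=1}^k\zeta_{g_i}=\sum_{j=0}^{\lfloor k/2\rfloor}\,\sum_{\substack{J\subset\{1,\ldots,k\}\\|J|=2j}}\,\sum_{P\text{ pairing of }J}\Bigl(\prod_{\{a,b\}\in P}C_{ab}\Bigr)\cdot\bigl({\textstyle\prod_{l\notin J}}\zeta_{g_l}\bigr)^{\!*}.
\]
By the symmetry of $f$, all $\binom{k}{2j}(2j-1)!!=k!/((k-2j)!\,j!\,2^j)$ terms of a fixed $j$ contribute identically, reproducing exactly the coefficients $1,\ C_k^2,\ \ldots,\ k!/((k/2)!\,2^{k/2})$ for even $k$ and ending at $k!/(((k-1)/2)!\,2^{(k-1)/2})$ for odd $k$; each contraction $C_{ab}$ becomes $\int_0^1\!\int_0^1 g_a(x)g_b(y)\,G_t(x,y)\,dx\,dy$ by the covariance formula of Theorem~\ref{CLT}(3), and the surviving Wick factor becomes $A_f(\zeta,\ldots,\zeta,\delta_{x_{2j+1}},\ldots,\delta_{x_k})$ once expressed in the Fourier basis. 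Extending back to general $f$ by $L_2$-continuity completes the argument.

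The main obstacle I expect is the bookkeeping for the pairing kernel. The covariance operator of $\zeta$ is the sum of a local multiplication piece $\frac{1}{\sqrt{\pi t}}\cdot\mathrm{Id}$ and the non-local integral operator $\tilde G_t$, so a strict Wick reduction produces both $\int\!\int g_ag_b\,\tilde G_t\,dx\,dy$ and a diagonal contribution $\frac{1}{\sqrt{\pi t}}\int g_ag_b\,du$; these two pieces must be folded into the single kernel $G_t(x,y)\,dx\,dy$ of the statement, understood as incorporating a $\frac{1}{\sqrt{\pi t}}\delta(x-y)$ Dirac mass on the diagonal. Once this identification is in place, the even/odd split of the theorem is automatic: for even $k$ the arguments of $A_f$ admit a complete pairing (giving the last, purely integral, term), while for odd $k$ exactly one $\zeta$-argument must survive in every term of the Wick expansion.
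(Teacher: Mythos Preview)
Your overall strategy—expand in a tensor basis, apply the joint CLT of Theorem~\ref{CLT}(3), and unfold via Wick/Isserlis—is the same idea as the paper's, which carries it out by induction on $k$ rather than invoking Isserlis all at once. The two routes are essentially equivalent.

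There is, however, a genuine mismatch. The paper's proof makes clear that, despite the notation $N_t^{\otimes k}$, the theorem concerns the \emph{factorial} measure $N_t^{(k)}$: the $k=2$ step explicitly subtracts the diagonal (writing $\int\!\int f\,N_t^{\otimes 2}=\int\!\int f\,N_t(dx)N_t(dy)-\int f(x,x)\,N_t(dx)$), and the inductive recursion runs over pairwise distinct atoms. You instead read $N_t^{\otimes k}$ as the ordinary product, which gives the clean factorisation $\prod_i X_t^n(g_i)$ for tensor $f$. That forces you, in your last paragraph, to absorb the $\frac{1}{\sqrt{\pi t}}\!\int g_a g_b$ piece of the covariance into $G_t$ as a Dirac on the diagonal—but $G_t$ is defined in Theorem~\ref{CLT}(1) as the smooth periodised kernel $g_t(v_1-v_2)+2\sum_{l\ge 1}g_t(v_1-v_2+l)$, so this ``folding'' is not legitimate. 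In the paper's computation, those $\frac{1}{\sqrt{\pi t}}$ contributions are cancelled exactly by the diagonal subtractions coming from the factorial power, and only the smooth $G_t$ survives in the limit. To repair your argument, either start from $N_t^{(k)}$ and carry the diagonal corrections through Lemma~\ref{exp_of_sum}, or acknowledge that you are proving the statement for the genuine product measure with pairing kernel $G_t(x,y)\,dx\,dy+\frac{1}{\sqrt{\pi t}}\delta(x-y)$, which is a different (equivalent) formulation.

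A secondary gap: your uniform-in-$n$ $L_2$ bound for the left-hand side does not follow from $\rho_t^{(j)}\le(\pi t)^{-j/2}$ together with the zero-marginal condition. Those ingredients do not control $\int_{[0,n]^{2k}}(f\otimes f)\,\rho_t^{(2k)}$, since $\rho_t^{(2k)}$ is not a product and the marginal cancellation does not act against it. What is actually needed is the decay of correlations (the mixing of Lemma~\ref{mixing}, or equivalently the decay of $\rho_t^{(2k)}-\rho_t^{(k)}\otimes\rho_t^{(k)}$ at large separation), which is precisely what makes the second moment scale like $n^k$ rather than $n^{2k}$.
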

\begin{proof}
We prove the statement using mathematical induction with respect to dimension $k$. Consider $k =2$. Let $\{e_j\}_{j\geq 0}$ be a orthonormal basis in $L_2([0;1])$ such that $\int_0^1 e_i(x)dx=0$ for                $i\geq 1$ and $e_0 = 1.$ Then function can be represented as
$f|_{[0;1]^2}= \sum_{i,j=1}^{\infty} c_{ij} e_i\otimes e_j.$ Denote by $\vph_j$ the periodic extension on $\mbR$ of the function $e_j$. Then we have
\begin{multline*}
\int_0^n\int_0^nf(x,y)N_t^{\otimes2}(dxdy) = \int_0^n \int_0^n f(x,y)N_t(dx)N_t(dy) - \int_0^n f(x,x)N_t(dx) = \\
=\sum_{i,j}^{\infty}c_{ij}\int_0^n \vph_i(x)N_t(dx)\int_0^n \vph_j(y)N_t(dy)- \int_0^n f(x,x)N_t(dx).
\end{multline*}
Using theorem \ref{CLT} we have
$$
\frac 1 n \int_0^n \vph_i(x)N_t(dx) \int_0^n \vph_j(y)N_t(dy)\Rightarrow \zeta(e_i)\zeta(e_j), \ n\to \infty
$$
and $\frac 1 n  \int_0^n f(x,x)N_t(dx) \Rightarrow \frac 1{\sqrt{\pi t}}\int_0^1 f(x,x)dx,$ $n\to \infty.$
From this follows
\begin{multline*}
\frac 1 n \int_0^n\int_0^nf(x,y)N_t^{\otimes2}(dxdy) =\\
=\sum_{i,j}^{\infty}c_{ij}\frac 1 n \int_0^n \vph_i(x)N_t(dx)\int_0^n \vph_j(y)N_t(dy)- \frac 1 n\int_0^n f(x,x)N_t(dx) \Rightarrow \\
\Rightarrow \sum_{i,j}^{\infty}c_{ij}\zeta( e_i)\zeta(e_j)-\frac 1{\sqrt{\pi t}}\int_0^1 f(x,x)dx, \ n\to \infty.
\end{multline*}
By definition of Wick product  the sum in the last expression is written as
\begin{multline*}
\sum_{i,j}^{\infty}c_{ij}\zeta( e_i)\zeta(e_j) -\frac 1{\sqrt{\pi t}}\int_0^1 f(x,x)dx =\\=
 \sum_{i,j}^{\infty}c_{ij}(\zeta( e_i)*\zeta(e_j) + \frac 1 2 \int_0^1\int_0^1 e_i(x)e_j(y)(G_t(x,y) + G_t(y,x))dxdy = \\= A_f(\zeta, \zeta)+ \int_0^1\int_0^1 f(x,y)G_t(x,y)dxdy,
\end{multline*}
where in the last equality we used symmetry of function $f.$

Now assume that we know the statement of theorem for $k-1$. To do the step of induction we rewrite the $k-$multiple integral with respect to $N^{\otimes k}$ in therm of $(k-1)-$multiple integral. Since a basis of $L_2$ can be taken  with step functions $e_j$, we can write
\begin{multline*}
    \int_0^1 {\substack{k\\ \ldots}}\int_0^1 e_1(x_1)\ldots e_k(x_k) N^{\otimes k}(dx_1\ldots dx_k) =\\=
    \sum_{i_1}\sum_{i_2\neq i_1}\ldots \sum_{i_k\neq i_{k-1},\ldots, i_1} e_1(u_{i_1})\ldots e_k(u_{i_k})=\\=
    \sum_{i_1}\sum_{i_2\neq i_1}\ldots \sum_{i_{k-1}\neq i_{k-2},\ldots, i_1} \left(\sum_{i_k} e_1(u_{i_1})\ldots e_k(u_{i_k}) -
    \sum_{j=1}^{k-1} e_1(u_{i_1})\ldots e_k(u_{i_j})\right)=\\=
    \int_0^1 {\substack{k-1\\ \ldots}}\int_0^1 e_1(x_1)\ldots e_{k-1}(x_{k-1}) N^{\otimes k-1}(dx_1\ldots dx_{k-1}) \int_0^1 e_k (x)N(dx)-\\-
    \sum_{j=1}^{k-1}\int_0^1 {\substack{k-1\\ \ldots}}\int_0^1 e_1(x_1)\ldots e_{k-1}(x_{k-1})e_k(x_j) N^{\otimes k-1}(dx_1\ldots dx_{k-1}).
\end{multline*}

Using the limit behaviour for $(k-1)-$multiple integral we get the statement of the theorem.

\end{proof}

\begin{corollary}For the function $f$ satisfying conditions of the previous theorem and off-diagonal (i.e.$f(\vec x) = 0$ if $x_i=x_j$ for some $i\neq j $) we have
$$
\frac 1 {n^{k/2}} \int_0^n {\substack{k \\ \ldots}} \int_0^n f(\vec x)N^{\otimes k}(dx_1\ldots dx_k) \Rightarrow A_f(\zeta, \ldots, \zeta), \ n\to \infty.
$$
\end{corollary}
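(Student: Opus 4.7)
The plan is to deduce the corollary from the preceding theorem by showing that, under the off-diagonal hypothesis, every correction term in the limit formula vanishes identically, leaving only the leading Wick polynomial $A_f(\zeta,\ldots,\zeta)$.

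First I would reduce the integral against the ordinary power $N_t^{\otimes k}$ to one against the factorial power $N_t^{(k)}$. By Lemma \ref{exp_of_sum},
$$
\int f\, dN_t^{\otimes k}=\sum_{j=1}^{k}\sum_{l_1+\cdots+l_j=k}A^k_{l_1,\ldots,l_j}\int f(\underbrace{x_1,\ldots,x_1}_{l_1},\ldots,\underbrace{x_j,\ldots,x_j}_{l_j})\,dN_t^{(j)}(d\vec x),
$$
and every summand with some $l_i\ge 2$ evaluates $f$ at a point with coincident coordinates, hence vanishes by the off-diagonal hypothesis. Only the $j=k$, $l_1=\cdots=l_k=1$ term survives, so $\int f\, dN_t^{\otimes k}=\int f\, dN_t^{(k)}$.

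Next I would invoke the preceding theorem, whose limit is $A_f(\zeta,\ldots,\zeta)$ plus a finite sum of correction terms, each obtained by selecting $2l$ arguments of $A_f$, replacing them with Dirac deltas, and integrating them in pairs against the kernel $G_t$. Expanding $f=\sum c_{i_1,\ldots,i_k}e_{i_1}\otimes\cdots\otimes e_{i_k}$ in a basis of $L_2([0;1])$, each such correction can be rewritten as $A_{\tilde f^{(l)}}(\zeta,\ldots,\zeta)$ for a contracted kernel $\tilde f^{(l)}$ produced from $f$ by $l$ pair-contractions against $G_t$. Under the off-diagonal hypothesis each $\tilde f^{(l)}$ should vanish identically, because the pairing effectively identifies variables at which $f$ is zero.

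The delicate point, and main obstacle, is making rigorous the vanishing of these partial contractions. The cleanest route is induction on $k$ mirroring the theorem's own inductive proof: in the recursion
$$
\int f\, dN_t^{(k)}=\int\!\!\int f(\vec x,y)\,dN_t^{(k-1)}(d\vec x)\,dN_t(y)-\sum_{j=1}^{k-1}\int f(x_1,\ldots,x_{k-1},x_j)\,dN_t^{(k-1)}(d\vec x),
$$
the second sum vanishes for off-diagonal $f$ since each of its summands evaluates $f$ with two coincident coordinates. The inductive hypothesis applied to the first term, together with the one-dimensional central limit theorem for $\int_0^n e_j\,dN_t$, then propagates without further contractions and produces $A_f(\zeta,\ldots,\zeta)$ in the limit.
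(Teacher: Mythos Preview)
Your reduction $\int f\,dN_t^{\otimes k}=\int f\,dN_t^{(k)}$ for off-diagonal $f$ is fine, and so is the observation that the subtracted diagonal sums in the recursion vanish. The gap is in the step where you conclude that the $G_t$-correction terms disappear.

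Take $k=2$. The theorem gives as limit
\[
A_f(\zeta,\zeta)+\int_0^1\!\!\int_0^1 f(x,y)\,G_t(x,y)\,dx\,dy,
\]
and the second summand is an honest double integral of $f$ against the \emph{continuous} kernel $G_t$; nothing here evaluates $f$ on the set $\{x=y\}$. The off-diagonal hypothesis $f(x,x)=0$ kills the $\frac{1}{\sqrt{\pi t}}\int_0^1 f(x,x)\,dx$ contribution that appears en route, but it does nothing to $\int\!\int f\,G_t$. Your sentence ``the pairing effectively identifies variables at which $f$ is zero'' is precisely where the argument breaks: contraction against $G_t(x,y)$ is not contraction against $\delta(x-y)$.

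The inductive route hides the same issue. After you drop the diagonal subtractions, the surviving term has limit $\sum_{\vec i} c_{\vec i}\,\zeta(e_{i_1})\cdots\zeta(e_{i_k})$ with \emph{ordinary} products of the Gaussians $\zeta(e_j)$, whereas $A_f(\zeta,\ldots,\zeta)$ is by definition built from \emph{Wick} products $\zeta(e_{i_1})\!*\!\cdots\!*\!\zeta(e_{i_k})$. The discrepancy between ordinary and Wick products is a sum of pair contractions by $\mathrm{cov}(\zeta(e_i),\zeta(e_j))$, and by Theorem~\ref{CLT}(3) this covariance equals $\frac{1}{\sqrt{\pi t}}\int e_ie_j$ \emph{plus} the $G_t$ integral. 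Off-diagonality of $f$ cancels only the first piece (it sums to $\frac{1}{\sqrt{\pi t}}\int f(x,x)\,dx=0$); the $G_t$ pieces survive and reproduce exactly the correction terms of the theorem. So the induction does not yield $A_f(\zeta,\ldots,\zeta)$ alone.

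In short, neither approach in your proposal establishes that the $G_t$-contraction terms vanish, and in fact the $k=2$ computation shows they need not. The paper gives no proof of this corollary, so there is nothing to compare against; but as written your argument does not close, and you should re-examine whether the statement requires a stronger hypothesis on $f$ (or a different reading of $A_f(\zeta,\ldots,\zeta)$) for the corrections to drop out.
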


\end{document}